\documentclass[11pt]{amsart}
\usepackage{graphicx}
\usepackage{amssymb}
\usepackage{amsmath}
\usepackage{amsthm,amsfonts,bbm}
\usepackage{amscd}
\usepackage{geometry}
\usepackage[all,2cell]{xy}
\usepackage{epsfig,epstopdf}
\usepackage[backref]{hyperref}
\usepackage{tikz}
\usepackage{caption}
\usepackage{subcaption}
%

\UseAllTwocells \SilentMatrices
\newtheorem{thm}{Theorem}[section]
\newtheorem{cor}[thm]{Corollary}
\newtheorem{lem}[thm]{Lemma}

\theoremstyle{definition}
\newtheorem{defi}[thm]{Definition}
\theoremstyle{remark}

\numberwithin{equation}{section}
\numberwithin{figure}{section}
\geometry{left=2.8cm, right=2.8cm, top=3.0cm,bottom=3.0cm}

\def \cl{\text{Cl}}
\def\deg{\text{deg}}
\def \d{\delta}

\def \deg{\text{deg}}
\def \dim{\text{dim}}

\def \im{\text{im~}}

\def \L{\mathcal{L}}
\def \lk{\text{Lk}}

\def \lamax{\lambda_{\max}}

\def \p{\partial}

\def \R{\mathbb{R}}
\def \s{\sigma}
\def \st{\text{St}}

\DeclareMathOperator{\sgn}{sgn}

\begin{document}
\title[Largest eigenvalue and balancedness of complex]{The largest normalized Laplacian eigenvalue and incidence balancedness of simplicial complexes}

\author[Y.-M. Song]{Yi-min Song}
\address{Center for Pure Mathematics, School of Mathematical Sciences, Anhui University, Hefei 230601, P. R. China}
\email{songym@stu.ahu.edu.cn}

\author[H.-F. Wu]{Hui-Feng Wu}
\address{Center for Pure Mathematics, School of Mathematical Sciences, Anhui University, Hefei 230601, P. R. China}
\email{wuhf@stu.ahu.edu.cn}

\author[Y.-Z. Fan]{Yi-Zheng Fan*}
\address{Center for Pure Mathematics, School of Mathematical Sciences, Anhui University, Hefei 230601, P. R. China}
\email{fanyz@ahu.edu.cn}
\thanks{*The corresponding author.
This work was supported by National Natural Science Foundation of China (Grant No. 12331012).}

\subjclass[2020]{05E45, 05C65, 47J10, 55U05}

\keywords{Simplicial complex, Laplace operator, largest eigenvalue, signed graph, balancedness}

\begin{abstract}
Let $K$ be a simplicial complex, and let $\Delta_i^{up}(K)$ be the $i$-th up normalized Laplacian of $K$.
Horak and Jost showed that the largest eigenvalue of $\Delta_i^{up}(K)$ is at most $i+2$, and characterized the equality case by the orientable or non-orientable circuits.
In this paper, by using the balancedness of signed graphs, we show that
$\Delta_i^{up}(K)$ has an eigenvalue $i+2$ if and only if $K$ has an $(i+1)$-path connected component $K'$ such that the $i$-th signed incidence graph $B_i(K')$ is balanced, which implies Horak and Jost's characterization.
We also characterize the multiplicity of $i+2$ as an eigenvalue of $\Delta_i^{up}(K)$, which generalizes the corresponding result in graph case.
Finally we gave some classes of infinitely many simplicial complexes $K$ with $\Delta_i^{up}(K)$ having an eigenvalue $i+2$ by using wedge, Cartesian product and duplication of motifs.
\end{abstract}

\maketitle

\section{Introduction}
The Laplacian operator on graphs and simplicial complexes has a long and rich history. In 1847 Kirchhoff \cite{Kirch} formulated the celebrated matrix-tree theorem with an application in electric networks.
In the early 1970s Fiedler \cite{Fied} used the  second smallest Laplacian eigenvalue to characterize the connectivity of a graph.
Bottema \cite{Bot} introduced the normalized graph Laplacian for the study  of transition probability on graphs; also see \cite{Chung93} for more details.
 Eckmann \cite{Eckmann} extended the Laplace operator from graphs to simplicial complexes, providing  the discrete version of the Hodge theorem, which can be expressed as
$ \text{ker}(\d_i^* \d_i+ \d_{i-1} \d_{i-1}^*) = \tilde{H}^i(K,\mathbb{R}),$
where $\d_i^* \d_i+ \d_{i-1} \d_{i-1}^*$ is the higher order combinatorial Laplacian of the simplicial complex $K$.
Duval and Reiner \cite{Duv} show that the combinatorial Laplace operator of a shifted simplicial complex has an integral spectrum.
Dong and Wachs \cite{Dong} gave an elegant proof of Bouc's result \cite{Bouc} on the decomposition of the representation of the symmetric group on the homology of a matching complex, and proved the spectrum of the Laplace operator of the matching complex is also integral.
   Some remarkable recent results on combinatorial Laplacian for simplicial complexes involve lots of algebraic and geometric aspects for complexes, which are applied in the study of independent number, chromatic number, theta number \cite{Bachoc, Golubev,HJ13B}.

Horak and Jost \cite{HJ13B} developed a general framework for Laplace operators defined in terms of the combinatorial structure of a simplicial complex, including the combinatorial Laplacian and the normalized Laplacian.
They also presented some upper bounds for a general Laplacian of simplicial complex $K$ in the paper.
Let $K$ be a simplicial complex and let  $L_i^{up}(K), \Delta_i^{up}(K)$ be the $i$-th up combinatorial  Laplacian  and normalized Laplacian of $K$, respectively.
For the combinatorial Laplacian, Horak and Jost \cite{HJ13B} showed that
$$\lamax(L_i^{up}(K)) \le (i+2) \max_{F \in S_i(K)} \deg F.$$
This bound was improved by Fan, Wu and Wang \cite{FWW} as
$$ \lamax(L_i^{up}(K)) \le \max_{\bar{F} \in S_{i+1}(K)} \sum_{F \in \p \bar{F}} \deg F.$$
Duval and Reiner \cite{Duv} proved that
$$\lamax(L_i^{up}(K))  \le n,$$
where $n$ is the number of vertices of $K$.

For the normalized Laplacian, Horak and Jost \cite{HJ13B} proved the eigenvalues of $\Delta_i^{up}(K)$ lie in the interval $[0,i+2]$, and characterized the simplicial complex $K$ with $\Delta_i^{up}(K)$ having an eigenvalue $i+2$ as follows.

\begin{thm}\cite[Theorem 7.2]{HJ13B}\label{jost}
For an $(i+1)$-path connected simplicial complex $K$, the following statements are equivalent:

$(1)$ the spectrum of $\Delta_i^{up}(K)$ contains the eigenvalue $i + 2$;

$(2)$ there are no $(i + 1)$-orientable circuits of odd length nor $(i + 1)$-non orientable circuits of even length in $K$.
\end{thm}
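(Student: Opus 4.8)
The plan is to locate the eigenvalue $i+2$ by a sharp Rayleigh-quotient identity and then to read off the equality case, first as balancedness of a signed graph and afterwards as the circuit condition. Writing $\deg F$ for the number of $(i+1)$-cofaces of an $i$-face $F$ and setting $\tilde f(F):=f(F)/\sqrt{\deg F}$, a direct computation with the weights of the normalized up Laplacian should give
\[
(i+2)\langle f,f\rangle-\langle\Delta_i^{up}f,f\rangle=\sum_{\bar F\in S_{i+1}(K)}\Big[(i+2)\sum_{F\in\p\bar F}\tilde f(F)^2-\Big(\sum_{F\in\p\bar F}[\bar F:F]\,\tilde f(F)\Big)^2\Big].
\]
Each summand is nonnegative by the Cauchy--Schwarz inequality applied to the $i+2$ numbers $[\bar F:F]\tilde f(F)$, which reproves $\lamax(\Delta_i^{up})\le i+2$; moreover $i+2$ is an eigenvalue if and only if some nonzero $f$ annihilates every summand, i.e. $[\bar F:F]\tilde f(F)$ is independent of $F\in\p\bar F$ for each fixed $\bar F$.

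Second, I would turn this pointwise condition into balancedness. Since $|[\bar F:F]|=1$, it forces $|\tilde f(F)|=|\tilde f(F')|$ whenever $F,F'$ share a coface $\bar F$; as $K$ is $(i+1)$-path connected, all $|\tilde f(F)|$ coincide, so $f$ vanishes nowhere and $g(F):=\sgn\tilde f(F)\in\{\pm1\}$ is defined on all of $S_i(K)$. The vanishing condition then reads $g(F)g(F')=[\bar F:F][\bar F:F']$ for every pair $F,F'\in\p\bar F$ and every $\bar F$, which says exactly that the $\pm1$-valued function $g$ switches all the incidence signs of the $i$-th signed incidence graph $B_i(K)$ to $+1$. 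By Harary's balance criterion such a $g$ exists if and only if $B_i(K)$ is balanced (every cycle has sign $+1$); conversely any balancing function produces, via $f(F)=g(F)\sqrt{\deg F}$, an eigenvector for $i+2$. Thus statement $(1)$ is equivalent to the balancedness of $B_i(K)$.

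Finally I would match balancedness with statement $(2)$. A closed $(i+1)$-path---a cyclic sequence of $(i+1)$-simplices in which consecutive ones meet along an $i$-face---projects to a cycle of $B_i(K)$ whose sign is the product of the incidence numbers $[\bar F:F]$ gathered along the shared faces. The core of this step is a bookkeeping lemma showing that, around such a circuit of length $L$, the accumulated product equals $(-1)^{L}$ in the orientable case and $(-1)^{L-1}$ in the non-orientable case. Granting this, a negative cycle of $B_i(K)$ occurs if and only if $K$ contains an orientable circuit of odd length or a non-orientable circuit of even length; hence $B_i(K)$ is balanced iff no such circuit exists, which is $(2)$, and combined with the previous step this yields $(1)\Leftrightarrow(2)$. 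As a sanity check, for $i=0$ all circuits are orientable, $B_0(K)$ is the underlying graph with every edge negative, and balancedness reduces to bipartiteness, recovering the classical fact that the largest normalized graph Laplacian eigenvalue equals $2$ exactly on bipartite components.

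The main obstacle is this last bookkeeping lemma: making the notions of orientable and non-orientable circuit precise, and proving that the monodromy of the local orientations around an $(i+1)$-path is governed by the product of the incidence signs of the associated cycle, with the stated dependence on the parity of $L$. The Rayleigh-quotient identity and the passage through Harary's criterion are comparatively routine once the weights of the normalized up Laplacian are pinned down; the delicate point is controlling signs consistently across the higher-dimensional incidences.
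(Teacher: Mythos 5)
Your proposal follows essentially the same route as the paper: the Rayleigh-quotient/Cauchy--Schwarz step with equality forcing $f([F])=k_{\bar F}\,\sgn([F],\p[\bar F])$ on each coface, hence constant modulus on an $(i+1)$-path connected component and balancedness of $B_i(K)$, is exactly the paper's Theorem \ref{main-result}, and your ``bookkeeping lemma'' is exactly the paper's unnumbered theorem following Corollary \ref{multi}. Since you flag that lemma as the main obstacle: the paper dispatches it by successive reorientation. Given a circuit $\bar F_1,\ldots,\bar F_L$ with $F_j=\bar F_j\cap\bar F_{j+1}$, reorient the $(i+1)$-faces one after another so that $\sgn([F_j],\p[\bar F_j])\,\sgn([F_j],\p[\bar F_{j+1}])=-1$ for $j=1,\ldots,L-1$; this is always possible because each step touches only one new adjacency, and by switching invariance it does not change the sign of the corresponding cycle of $B_i(K)$. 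The closing adjacency then contributes $-1$ precisely when the circuit is orientable, so the cycle sign is $(-1)^L$ in the orientable case and $(-1)^{L-1}$ in the non-orientable case --- the parity you conjectured --- and a negative cycle conversely yields an orientable odd or non-orientable even circuit by the same normalization. Two minor points of hygiene: your function $g$ on $S_i(K)$ alone does not switch all incidences to $+1$; it only makes the edges at each fixed $\bar F$ carry a common sign $\epsilon(\bar F)$, so either switch additionally at the $(i+1)$-face vertices or note that every cycle sign telescopes to $\prod_j g(F_j)^2=1$. Also $B_0(K)$ is the signed subdivision graph of $K^{(1)}$ (each edge contributing one positive and one negative incidence), not the underlying graph with all edges negative, though your conclusion that balancedness equals bipartiteness is unaffected.
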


When $i=0$ (i.e. graph case), a $1$-circuit is exactly a cycle which is always orientable.
So, Theorem \ref{jost} implies that a connected graph has an normalized Laplacian eigenvalue $2$ if and only if it contains no odd cycle (or equivalently, $G$ is bipartite); also see \cite[Chapter 1]{Chung97}.
However, it may be difficult to check the condition (2) in  Theorem \ref{jost} for a general $i$.

In this paper we will use the balancedness of the $i$-th signed incidence graph to give an equivalent characterization of the condition (1)  in  Theorem \ref{jost}.
Our characterization is only depending on the $i$-th signed incidence  graph of $K$, and is independent of the orientation of faces of $K$ (Theorem \ref{main-result}).
We also characterized the multiplicity of $i+2$ as an eigenvalue of $\Delta_i^{up}(K)$, which generalizes the corresponding result on graph case (see \cite[Chapter 1]{Chung97}).
Finally we gave some classes of infinitely many simplicial complexes $K$ with $\Delta_i^{up}(K)$ having an eigenvalue $i+2$ by using wedge, Cartesian product and duplication of motifs.

We remark that the signed incidence graph plays an important role in the Laplacian spectrum of simplicial complexes.
Fan, Wu and Wang \cite{FWW} proved that the largest eigenvalue of the $i$-th up Laplacian of a simplicial complex $K$ is not greater than the largest eigenvalue of the $i$-th up signless Laplacian of $K$. Moreover, if $K$ is $(i+1)$-path connected, then the equality holds if and only if the $i$-th signed incidence graph $B_i(K)$ of $K$ is balanced.

\section{Preliminaries}
\subsection{Simplical complex and Laplace operator}
Let $V$ be a finite set.
An \emph{abstract simplicial complex} (simply called \emph{complex}) $K$ over $V$ is a collection of the subsets of $V$ which is closed under inclusion.
An \emph{$i$-face} or an \emph{$i$-simplex} of $K$ is an element of $K$ with cardinality $i+1$.
The \emph{dimension} of an $i$-face is $i$, and the dimension of $K$ is the maximum dimension of all faces of $K$.
The faces which are maximum under inclusion are called \emph{facets}.
We say $K$ is \emph{pure} if all facets have the same dimension.

Let $S_i(K)$ be the set of all $i$-faces of $K$ for $i\ge 0$.
The $p$-skeleton of $K$, written $K^{(p)}$, is the set of all simplices of $K$ of dimension less than or equal to $p$.
So, $K^{(1)}$ is the usual graph with vertex set $V(K)$ consisting of $0$-faces usually called \emph{vertices}, and  edge set $E(K)$ consisting of $1$-faces usually called \emph{edges}.
We say $K$ is \emph{connected} if the graph $K^{(1)}$ is connected.
An \emph{$i$-path} of length $m$ in $K$ is an ordering of $i$-simplices: $F_1,F_2,\cdots,F_m$, such that $F_i \cap F_j$ is an $(i-1)$-face of $K$ if and only if $|j-i|=1$.
When $F_m$ coincides with $F_1$, we say that $L$ is an \emph{$i$-circuit} of length $m-1$.
The complex $K$ is \emph{$i$-path connected} if any two $i$-faces $F_1$ and $F_2$ of $K$ are connected by an $i$-path.
An \emph{$i$-path connected component} of $K$ is a maximal $i$-path connected sub-complex of $K$ under inclusion.

We say a face $F$ is \emph{oriented} if we assign an ordering of its vertices and write it as $[F]$.
Two ordering of the vertices of $F$ are said to determine the \emph{same orientation} if there is an even permutation transforming one ordering into the other.
If the permutation is odd, then the orientation are opposite.
The \emph{$i$-chain group} of $K$ over $\mathbb{R}$, denoted by $C_i(K,\mathbb{R})$, is the vector space over $\R$ generated by all oriented $i$-faces of $K$ modulo the relation $[F_1]+[F_2]=0$, where $[F_1]$ and $[F_2]$ are two different orientations of a same face.
The \emph{cochain group} $C^i(K,\R)$ is defined to be the dual of $C_i(K,\mathbb{R})$, i.e. $C^i(K,\R)=\text{Hom}(C_i(K,\mathbb{R}),\R)$, which are generated by the dual basis consisting of $[F]^*$ for all $F \in S_i(K)$, where
$ [F]^*([F])=1, [F]^*([F'])=0 \text{~for~} F' \ne F.$
The functions $[F]^*$ are called the \emph{elementary cochains}.

We may assume that $\emptyset \in K$, called the empty simplex with dimension $-1$.
Then $C_{-1}(K, \R)=\R \emptyset$, identified with $\R$,
and $C^{-1}(K, \R)=\R \emptyset^*$, also can be identified with $\R$, where $\emptyset^*$ is the identify function on the empty simplex.
For each integer $i=0,1,\ldots,\dim K$, the \emph{boundary map} $\p_i: C_i(K,\R) \to C_{i-1}(K,\R)$ is defined to be
$$\p_i([v_0,\ldots,v_i])=\sum_{j=0}^i (-1)^j[v_0,\ldots,\hat{v}_j,\ldots,v_i],$$
for each oriented $i$-face $[v_0,\ldots,v_i]$ of $K$,
where $\hat{v}_j$ denotes the vertex $v_j$ has been omitted.
We will have the \emph{augmented chain complex} of $K$:
$$  \cdots \longrightarrow C_{i+1}(K,\R) \stackrel{\p_{i+1} }{\longrightarrow} C_i(K,\R) \stackrel{\p_{i} }{\longrightarrow} C_{i-1}(K,\R) \longrightarrow \cdots \longrightarrow  C_{-1}(K,\R) \longrightarrow 0,$$
satisfying $\p_i \circ \p_{i+1}=0$.
The \emph{$i$-th reduced homology group} of $K$ is defined to be $\tilde{H}_i(K)=\ker \p_i / \im \p_{i+1}$.
The complex $K$ is called \emph{acyclic} if its reduced homology group vanishes in all dimensions.

Here, by abuse of notation, we use $\p \bar{F}$ to denote the set of all $i$-faces in the boundary of $\bar{F}$ when $\bar{F} \in S_{i+1}(K)$.
If $[\bar{F}]:=[v_0,\ldots,v_i]$ and $[F_j]:=[v_0,\ldots,\hat{v}_j,\ldots,v_i]$,
then we define $\sgn([F_j], \p[\bar{F}])=(-1)^j$, namely, the sign of $[F_j]$ appeared in $\p[\bar{F}]$, and $\sgn([F], \p[\bar{F}])=0$ if $F \notin \p \bar{F}$.
The \emph{coboundary map} $\delta_{i-1}: C^{i-1}(K,\R) \to C^i(K,\R)$ is the conjugate of $\p_i$ such that $ \d_{i-1} f = f \p_i.$
So
$$ (\d_{i-1} f)([v_0,\ldots,v_i])=\sum_{j=0}^i (-1)^jf([v_0,\ldots,\hat{v}_j,\ldots,v_i]).$$
Similarly, we have the \emph{augmented cochain complex} of $K$:
$$  \cdots \longleftarrow C^{i+1}(K,\R) \stackrel{\delta_{i} }{\longleftarrow} C^i(K,\R) \stackrel{\delta_{i-1} }{\longleftarrow} C^{i-1}(K,\R) \longleftarrow \cdots  {\longleftarrow} C^{-1}(K,\R) \longleftarrow  0,$$
satisfying $\delta_{i}\circ \delta_{i-1}=0$.
The \emph{$i$-th reduced cohomology group}  is defined to be $\tilde{H}^i(K,\R)=\ker \delta_i / \im \delta_{i-1}.$
As vector spaces, $\tilde{H}^i(K,\R)$ is the dual of $\tilde{H}_i(K,\R)$ and is isomorphic to $\tilde{H}_i(K,\R)$.

 By introducing inner products in $C^i(K,\R)$ and $C^{i+1}(K,\R)$ respectively, we have the adjoint $\d^*_i: C^{i+1}(K,\R) \to C^i(K,\R)$ of $\d_i$, which is defined by
$$ ( \delta_i f_1, f_2 )_{C^{i+1}} =( f_1, \delta^*_i f_2)_{C^{i}}$$ for all $f_1 \in C^i(K,\R), f_2 \in C^{i+1}(K,\R)$.

\begin{defi}\cite{HJ13B}
The following three operators are defined on $C^i(K,\R)$, where $K$ is a complex with an orientation $\s$.

(1) The $i$-dimensional combinatorial up Laplace operator or simply the \emph{$i$-up Laplace operator} is defined to be  $  \L_i^{up}(K,\s):=\d_i^* \d_i.$

(2) The $i$-dimensional combinatorial down Laplace operator or the \emph{$i$-down Laplace operator} is defined to be $ \L_i^{down}(K,\s):=\d_{i-1}\d_{i-1}^*.$

(3) The $i$-dimensional combinatorial Laplace operator or the \emph{$i$-Laplace operator} is defined to be $\L_i(K,\s)=\L_i^{up}(K,\s)+\L_i^{down}(K,\s)=\delta^*_i \delta_i+\delta_{i-1}\delta^*_{i-1}.$
 \end{defi}

We will simply use $\L_i^{up}(K)$, $\L_i^{down}(K)$ and $\L_i(K)$ if the orientation $\s$ is clear  from the context.
In fact, the spectra of $\L_i^{up}(K)$ , $\L_i^{down}(K)$ and $\L_i(K)$ are all independent of the orientation; see Lemma \ref{reverse}.
All the three Laplacians are self-adjoint, nonnegative and compact.

Define a weight function on all faces of $K$:
$ w: \cup_{i=-1}^{\dim K} S_i(K) \to \R^+,$
and introduce the inner product in $C^i$ as
$$ (f,g)_{C^i}=\sum_{F \in S_i(K)} w(F) f([F])g([F]).$$
If $w \equiv 1$ on all faces, then the underlying Laplacian is the \emph{combinatorial Laplace operator}, denoted by $L_i(K)$, as discussed in \cite{Duv, Fried}.
If the weights of all facets are equal to $1$, and $w$ satisfies the normalizing condition:
\begin{equation}\label{deg} w(F)=\sum_{\bar{F} \in S_{i+1}(K): F \in \p \bar{F}}w(\bar{F}),\end{equation}
for every $F \in S_i(K)$ which is not a facet of $K$,
then $w$ determines the \emph{normalized Laplace operator}, denoted by $\Delta_i(K)$, as analyzed in \cite{HJ13B}.
The right side of (\ref{deg}) is defined to be the \emph{degree} of the face $F$, denoted by $\deg F$.

 Horak and Jost \cite{HJ13A, HJ13B} give explicit formulas for $\L_i^{up}(K)$ and $\L_i^{down}(K)$ as follows.
\begin{equation}\label{upp}
(\L_i^{up}(K) f)([F])  =\sum_{\bar{F} \in S_{i+1}(K): \atop F \in \p \bar{F}} \frac{w(\bar{F})}{w(F)} f([F]) + \!\!\! \sum_{F' \in S_i(K): F'\ne F, \atop F \cup F'=\bar{F} \in S_{i+1}(K)}\!\!\!
\frac{w(\bar{F})}{w(F)} \sgn([F], \p [\bar{F}]) \sgn([F'], \p [\bar{F}]) f([F']).
\end{equation}
\begin{equation}\label{down}
(\L_i^{down}(K) f)([F])  =\sum_{E \in \p F} \frac{w(F)}{w(E)} f([F]) + \!\!\! \sum_{F' \in S_i(K): F'\ne F, \atop F \cap F'=E \in S_{i-1}(K)}\!\!\!
\frac{w(F')}{w(E)} \sgn([E], \p [F]) \sgn([E], \p [F']) f([F']).
\end{equation}

\subsection{Signed graph and signed incidence graph of a complex}
A \emph{signed graph} $(G,\varsigma)$ is a graph $G$ with a signing on its edges: $\varsigma: E(G) \to \{1,-1\}$.
In particular, if $\varsigma \equiv 1$, namely all edges have positive signs, we will use $(G,+)$ to denote the signed graph.
The \emph{sign} of a subgraph $H$ of $G$, denoted by $\varsigma(H)$, is defined to be the product of the signs of all edges of $H$.
The signed graph $(G,\varsigma)$ is called \emph{balanced} if all signs of the cycles of $G$ are positive.
The signed graph and its balancedness were introduced by Harary \cite{Harary} on research of social psychology.

Switching is an important operation on signed graphs which preserves the balancedness.
A \emph{switching} $s_v$ on $\Gamma$ at a vertex $v$  is an operation which reverses the signs of all edges incident to $v$, and keeps the signs of other edges invariant.
The signed graphs $(G,\varsigma)$ is called \emph{switching equivalent} to a signed graph $(G,\varsigma')$ if $(G,\varsigma')$ can be obtained from $(G,\varsigma)$ by a sequence of switchings on vertices.
It is known that switching does not change the signs of cycles and hence the balancedness of a signed graph.

\begin{lem}\cite[Corollary 3.3]{Z1}\label{balance}
A signed graph $(G,\varsigma)$ is balanced if and only if it can be switched to $(G,+)$.
\end{lem}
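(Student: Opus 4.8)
The plan is to prove the two implications separately, using the fact already recorded above that a switching preserves the sign of every cycle. With that in hand the forward implication is immediate: if $(G,\varsigma)$ can be switched to $(G,+)$, then since every cycle of $(G,+)$ is positive and the switchings connecting the two signings leave every cycle sign unchanged, every cycle of $(G,\varsigma)$ is positive as well, so $(G,\varsigma)$ is balanced.

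For the converse I would first reduce to the connected case, since both balancedness and switch-equivalence are defined componentwise; so assume $G$ is connected and $(G,\varsigma)$ balanced. It is convenient to encode an arbitrary sequence of switchings by a single function $\theta\colon V(G)\to\{1,-1\}$: switching at each vertex $v$ with $\theta(v)=-1$ and leaving the others fixed turns the sign of an edge $uv$ into $\theta(u)\varsigma(uv)\theta(v)$, and conversely every switch-equivalent signing arises in this way. Now fix a spanning tree $T$ of $G$ with root $r$, put $\theta(r)=1$, and define $\theta$ recursively down the tree: if $v$ is joined to its parent $u$ by a tree edge, set $\theta(v)=\theta(u)\varsigma(uv)$. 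With this choice every tree edge $uv$ acquires the new sign $\theta(u)\varsigma(uv)\theta(v)=\theta(u)^2\varsigma(uv)^2=1$, so after switching all edges of $T$ are positive.

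The crux is to show the non-tree edges become positive too. For a non-tree edge $e=uv$, the unique $u$--$v$ path in $T$ together with $e$ forms the fundamental cycle $C_e$. After the switching all tree edges of $C_e$ are positive, so the new sign of $C_e$ equals the new sign of $e$; but the switching does not change the sign of $C_e$, which equals its original value $\varsigma(C_e)=+1$ by balancedness. Hence $e$ is positive after switching. As $e$ was an arbitrary non-tree edge, every edge of $G$ is positive under the switching determined by $\theta$, i.e. $(G,\varsigma)$ switches to $(G,+)$. I expect the only real content to be this last step: the spanning tree supplies exactly enough freedom in $\theta$ to make all tree edges positive, and balancedness, via the fundamental cycles together with the invariance of cycle signs, then forces the remaining edges; the recursive well-definedness of $\theta$ and the componentwise reduction are routine bookkeeping.
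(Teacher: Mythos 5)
The paper does not prove this lemma itself---it is quoted from Zaslavsky \cite[Corollary 3.3]{Z1}---so there is no internal proof to compare against. Your argument is correct and is the standard spanning-tree/switching-function proof of the balance theorem: encoding switchings by $\theta\colon V(G)\to\{1,-1\}$, normalizing a spanning tree to all-positive, and using invariance of cycle signs on fundamental cycles to force the non-tree edges. All steps check out, including the well-definedness of the recursive $\theta$ and the reduction to connected components.
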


Let $K$ be a complex with an orientation $\s$, and let $F, \bar{F} \in K$.
If $F \in \p \bar{F}$, then $(F,\bar{F})$ is an incidence of $K$.
The \emph{$i$-th incidence graph} $B_i(K)$ is a bipartite graph with vertex set $S_i(K)\cup S_{i+1}(K)$ such that $\{F,\bar{F}\}$ is an edge if and only if $F\in\partial\bar{F}$.
Actually $B_i(K)$ is a signed graph such that the sign of an edge $\{F,\bar{F}\}$ is given by  $\sgn([F],\partial[\bar{F}])$, denoted by $(B_i(K),\s)$ and called the \emph{$i$-th signed incidence graph} of $K$.
Note that a \emph{signature matrix} is a diagonal matrix with $\pm 1$ on its diagonal.

The following lemma gives a relation between the reverse of faces of $K$ and the switching equivalence of the signed incidence graph of $K$, and also the change of the Laplacians of $K$ after the reverse of faces.

\begin{lem}\cite{FWW}\label{reverse}
Let $K$ be a complex with an orientation $\s$, and let $(B_i(K),\s)$ denote the corresponding $i$-th signed incidence graph.
Let $\tau$ be another orientation of $K$.
Then the following results hold.

$(1)$ $(B_i(K),\tau)$ is obtained from $(B_i(K),\s)$ only by applying
 a switching $s_F$ on an $i$- or $(i+1)$-face $F$ of $K$
 if and only if $\tau$ is obtained from $\s$ only by reversing
the orientation of the face $F$, where the reversed orientation of a $0$-face $[v]$ is defined to be $-[v]$.

$(2)$ If $\tau$ is obtained from $\s$ only by reversing
the orientation of $i$-face $F$, then
$$ \L_i^{up}(K,\tau)=S_F \L_i^{up}(K,\s)S_F,$$
where $S_F$ is a signature matrix defined on the $i$-th elementary cochains of $K$ with only $-1$ on the diagonal corresponding to $[F]^*$.

$(3)$ If $\tau$ is obtained from $\s$ only by reversing
the orientation of an $(i+1)$-face $F$, then
$$ \L_i^{up}(K,\tau)=\L_i^{up}(K,\s).$$
\end{lem}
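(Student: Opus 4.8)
The plan is to establish each of the three parts by directly tracking how the relevant sign data change when a single face is reoriented, and then matching those changes to the operations of switching and of conjugation by a signature matrix.

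For part $(1)$, I would begin by recording precisely how a single reorientation affects the edge signs of $B_i(K)$. The sign of an edge $\{F,\bar F\}$ is $\sgn([F],\p[\bar F])$, the coefficient of the oriented face $[F]$ in the boundary chain $\p[\bar F]$, and it therefore depends on the orientations of both the $i$-face $F$ and the $(i+1)$-face $\bar F$. Reversing the orientation of an $i$-face $F_0$ replaces $[F_0]$ by $-[F_0]$, so for every incident $(i+1)$-face $\bar F$ the coefficient $\sgn([F_0],\p[\bar F])$ flips while every other edge sign is unchanged; this is exactly the switching $s_{F_0}$ at the vertex $F_0$ of $B_i(K)$. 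Dually, reversing the orientation of an $(i+1)$-face $\bar F_0$ replaces $\p[\bar F_0]$ by $-\p[\bar F_0]$, so $\sgn([F'],\p[\bar F_0])$ flips for every $F'\in\p\bar F_0$, which is the switching $s_{\bar F_0}$. Since reorienting a face of any other dimension leaves all of the signs $\sgn([F],\p[\bar F])$ untouched, single reorientations of $i$- or $(i+1)$-faces correspond precisely to single switchings on the matching vertices, which gives both directions of the ``if and only if''.

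For parts $(2)$ and $(3)$ I would read the matrix entries of $\L_i^{up}(K,\s)$ off the explicit formula (\ref{upp}): the diagonal entry indexed by $F$ is $\deg F/w(F)$ and carries no sign, while the off-diagonal entry indexed by $(F,F')$ with $F\cup F'=\bar F\in S_{i+1}(K)$ equals $\frac{w(\bar F)}{w(F)}\sgn([F],\p[\bar F])\sgn([F'],\p[\bar F])$. For part $(2)$, reversing the $i$-face $F$ flips $\sgn([F],\p[\bar F])$ for every incident $\bar F$, hence every off-diagonal entry in the row and column indexed by $F$ changes sign, the diagonal is unaffected, and all other entries are untouched; this is exactly the effect of conjugating by the signature matrix $S_F$ carrying $-1$ in the $F$-coordinate, so $\L_i^{up}(K,\tau)=S_F\L_i^{up}(K,\s)S_F$. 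For part $(3)$, reversing the $(i+1)$-face $\bar F_0$ flips both $\sgn([F],\p[\bar F_0])$ and $\sgn([F'],\p[\bar F_0])$ simultaneously in each entry with $F\cup F'=\bar F_0$, so their product is unchanged, and all remaining entries as well as the whole diagonal are unchanged; hence $\L_i^{up}(K,\tau)=\L_i^{up}(K,\s)$.

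The genuinely substantive step is the sign bookkeeping in part $(1)$: one must verify that $\sgn([F],\p[\bar F])$ is jointly sign-sensitive in the two orientations in exactly the way claimed, and handle the degenerate $i=0$ case, where an $i$-face is a single vertex and its reversal is the formally defined $-[v]$. Once this is settled, parts $(2)$ and $(3)$ follow by inspection of (\ref{upp}), since those formulas isolate precisely the sign factors that the reorientation toggles.
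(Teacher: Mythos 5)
Your verification is correct: the edge sign $\sgn([F],\p[\bar F])$ flips exactly when one of the two incident faces is reversed, which identifies single reorientations with single switchings, and parts (2) and (3) then follow by inspecting the entries of formula (\ref{upp}), whose diagonal carries no sign and whose off-diagonal $(F,F')$ entry carries the product $\sgn([F],\p[\bar F])\sgn([F'],\p[\bar F])$. Note that the paper itself states this lemma as an import from \cite{FWW} and gives no proof, so there is nothing internal to compare against; your direct sign-bookkeeping is the natural argument and is sound, including the $i=0$ convention $-[v]$ needed to make the reversal of a vertex meaningful.
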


By Lemma \ref{reverse}, we know for any two orientations $\s, \tau$ on $K$, $B_i(K,\s)$ is switching equivalent to $B_i(K,\tau)$, and hence they have the same balancedness.

\section{A characterization of the $i$-th normalized Laplacian with eigenvalue $i+2$}

Let $K$ be a complex, and let $\lamax(\Delta_i^{up}(K))$ be the largest eigenvalue of $\Delta_i^{up}(K)$.
Horak and Jost \cite{HJ13B} proved that $\lamax(\Delta_i^{up}(K)) \le i+2$, and give an characterization of the complex $K$ with $\lamax(\Delta_i^{up}(K)) = i+2$ (Theorem \ref{jost}).
In this section, we will use the $i$-th signed incidence graph of $K$ to give an alternative characterization of the equality case.
We remark that by definition $\lamax(\Delta_{i+1}^{down}(K))=\lamax(\Delta_i^{up}(K))$.
So the following result also holds for the $(i+1)$-down normalized Laplacian.
We always assume the complex $K$ has the dimension greater then $i$ when discussing the eigenvalues of $\Delta_i^{up}(K)$.

\begin{thm}\label{main-result}
Let $K$ be a complex with an orientation $\s$.
Then $\lamax(\Delta_i^{up}(K))=i+2$ if and only if $K$ contains an $(i+1)$-path connected components $K'$ such that $(B_i(K'),\s)$ is balanced.
\end{thm}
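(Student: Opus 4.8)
The plan is to work with the variational (Rayleigh quotient) description of the largest eigenvalue and to read off the equality case of the Horak--Jost bound directly from the signs of the incidence graph.

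First I would rewrite the quadratic form. Since $\Delta_i^{up}(K)=\delta_i^*\delta_i$, we have $(\Delta_i^{up}(K)f,f)_{C^i}=\|\delta_i f\|_{C^{i+1}}^2$, and writing $a_F:=f([F])$ and $\epsilon_{F,\bar F}:=\sgn([F],\partial[\bar F])$,
$$\|\delta_i f\|^2=\sum_{\bar F\in S_{i+1}(K)} w(\bar F)\Big(\sum_{F\in\partial\bar F}\epsilon_{F,\bar F}\,a_F\Big)^2.$$
Each $(i+1)$-face has exactly $i+2$ faces in its boundary, so Cauchy--Schwarz gives $\big(\sum_{F\in\partial\bar F}\epsilon_{F,\bar F}a_F\big)^2\le(i+2)\sum_{F\in\partial\bar F}a_F^2$; summing against $w(\bar F)$ and collapsing $\sum_{\bar F\ni F}w(\bar F)=w(F)$ for the non-facet $i$-faces via the normalizing condition (\ref{deg}) reproves $\|\delta_i f\|^2\le(i+2)\|f\|^2$. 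This reduces the theorem to deciding when equality is attained by some $f\neq 0$. Tracking the inequalities, equality in Cauchy--Schwarz for each $\bar F$ forces the quantities $\epsilon_{F,\bar F}a_F$ to be equal over all $F\in\partial\bar F$, and (to make the leftover facet terms in $\|f\|^2$ vanish) $a_F=0$ on every $i$-face that is a facet.

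Next I would translate this equality condition into the signed incidence graph. Introducing an auxiliary value $b_{\bar F}$ on each $(i+1)$-face, the condition reads $a_F=\epsilon_{F,\bar F}\,b_{\bar F}$ for every edge $\{F,\bar F\}$ of $(B_i(K),\s)$; that is, the vertex function $g$ equal to $a$ on $S_i(K)$ and to $b$ on $S_{i+1}(K)$ satisfies $g(F)=\epsilon_{F,\bar F}\,g(\bar F)$ along every edge. I would then record the combinatorial dictionary: the nontrivial connected components of $B_i(K)$ are exactly the signed incidence graphs $B_i(K')$ of the $(i+1)$-path connected components $K'$ of $K$ that contain an $(i+1)$-face. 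The point is that if an $i$-face $F$ lies in $\partial\bar F\cap\partial\bar F'$ then $\bar F$ and $\bar F'$ are $(i+1)$-adjacent, so every $i$-face of positive degree sits over a unique such $K'$, while the facet $i$-faces are isolated vertices of $B_i(K)$.

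The heart of the matter is the following elementary fact about a connected signed graph $(G,\varsigma)$: a nonzero $g$ with $g(u)=\varsigma(uv)\,g(v)$ along every edge $uv$ exists if and only if $(G,\varsigma)$ is balanced. For the ``only if'' direction I would run the values of $g$ around an arbitrary cycle $C\colon v_0v_1\cdots v_k=v_0$ to get $g(v_0)=\varsigma(C)\,g(v_0)$; since $|g|$ is constant on a connected graph it is nonzero everywhere, forcing $\varsigma(C)=1$. For the ``if'' direction I would invoke Lemma \ref{balance} to obtain vertex signs $\eta$ with $\epsilon_{F,\bar F}=\eta_F\eta_{\bar F}$ and take $g=\eta$. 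Assembling: $\lamax(\Delta_i^{up}(K))=i+2$ iff some nontrivial component of $B_i(K)$ carries such a nonzero $g$ (set $g=0$ on all other components and on isolated $i$-faces), iff some $(i+1)$-path connected component $K'$ has $(B_i(K'),\s)$ balanced. In the balanced case I would also verify explicitly that $f=\eta|_{S_i(K')}$, extended by $0$, has Rayleigh quotient exactly $i+2$, using the normalizing identity $\sum_{F\in K'}w(F)=(i+2)\sum_{\bar F\in K'}w(\bar F)$.

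The main obstacle I anticipate is not any single computation but the careful bookkeeping linking the three equality phenomena---the Cauchy--Schwarz equality per $(i+1)$-face, the consistent-labeling condition on $(B_i(K),\s)$, and balancedness via switching (Lemma \ref{balance})---while correctly accounting for the weights and for the facet $i$-faces that contribute to $\|f\|^2$ but not to $\|\delta_i f\|^2$. In particular, ensuring that the labeling extracted from a balanced component is genuinely nonzero on the $i$-faces, so that it is a bona fide eigenfunction of $\Delta_i^{up}(K)$ rather than merely a solution of the auxiliary system, is the step requiring the most care, and it is precisely where the identification of the components of $B_i(K)$ with the $B_i(K')$ is used.
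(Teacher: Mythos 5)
Your proposal is correct and follows essentially the same route as the paper: the Rayleigh quotient plus Cauchy--Schwarz with the normalizing condition (\ref{deg}) for the upper bound, the per-face equality condition $f([F])=k_{\bar F}\sgn([F],\partial[\bar F])$ traced around cycles of $(B_i(K'),\s)$ for necessity, and Lemma \ref{balance} (switching to the all-positive signature) to build a constant eigenfunction for sufficiency. Your explicit handling of the facet $i$-faces and the identification of components of $B_i(K)$ with the $B_i(K')$ is a slightly more careful bookkeeping of the same argument, not a different method.
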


\begin{proof}
($\Rightarrow$) Let $f$ be an eigenfunction of $\Delta_i^{up}(K)$ associated with the largest eigenvalue $\lamax(\Delta_i^{up}(K))$.
Then we have
\begin{equation}\label{i+2}
\begin{aligned}
(i+2) (f,f)&=(\Delta_i^{up}f,f)=(\delta_i f,\delta_i f)\\
&=\sum_{\bar{F}\in S_{i+1}(K)}f(\partial[\bar{F}])^2\omega(\bar{F})\\
&=\sum_{\bar{F}\in S_{i+1}(K)}\big(f(\sum_{F\in\partial \bar{F}}\text{sgn}([F],\partial[\bar{F}])[F])\big)^2
\omega(\bar{F})\\
&=\sum_{\bar{F}\in S_{i+1}(K)}\big(\sum_{F\in\partial\bar{F}}\text{sgn}([F],\partial[\bar{F}])f([F])\big)^2
\omega(\bar{F})\\
&\le \sum_{\bar{F}\in S_{i+1}(K)} (i+2) \big(\sum_{F\in\partial\bar{F}} f([F])^2\big) \omega(\bar{F})\\
&= (i+2) \sum_{F \in S_i(K)}  f([F])^2 \sum_{\bar{F}: F \in \p \bar{F}} \omega(\bar{F})\\
&=(i+2)\sum_{F\in S_i(K)}f([F])^2\omega(F)=(i+2) (f,f),
\end{aligned}
\end{equation}
where the inequality in (\ref{i+2}) follows from Cauchy-Schwarz inequality.
So the inequality in (\ref{i+2}) holds as an equality, and then 
for each $\bar{F}\in S_{i+1}(K)$,
$$\big(\sum_{F\in\partial\bar{F}}\text{sgn}([F],[\partial[\bar{F}])f([F])\big)^2
=(i+2)\sum_{F\in S_i(K)}f([F])^2.$$
Hence, for each $\bar{F} \in S_{i+1}(K)$, there exist a constant $k_{\bar{F}}$ such that
for all $F\in \partial\bar{F}$,
\begin{equation}\label{kf}
f([F])=k_{\bar{F}}\text{sgn}([F],[\partial[\bar{F}]),
\end{equation}
and then
\begin{equation}\label{kfA}
|f([F])| = |k_{\bar{F}}|.
\end{equation}

Now let $F \in S_i(K)$ such that $f([F])\ne 0$, and let $K'$ be the $(i+1)$-path connected component of $K$ which contains the face $F$.
Then $B_i(K')$ is a connected component of $B_i(K)$.
So, for any two faces $F_1, F_2 \in S_i(K')$, there exists an path connecting them, and hence $|f([F_1])|=|f([F_2])|$ by (\ref{kfA}), which implies that for all $F \in S_i(K')$ and $\bar{F} \in S_{i+1}(K')$,
\begin{equation}\label{kfCom}
|f([F])|=\alpha=|k_{\bar{F}}|>0,
\end{equation}
for some positive constant $\alpha$.

Let $C$ be an cycle of $(B_i(K'),\s)$ on the vertices $F_1, \bar{F}_1, F_2, \bar{F}_2, \ldots, F_k, \bar{F}_k, F_1$, where $F_i,F_{i+1} \in \bar{F}_i$ for $i \in [k]$ and $F_{k+1}=F_1$.
By (\ref{kf}), for $i \in [k]$, we have
\begin{equation}\label{cycle}
f([F_i])=k_{\bar{F}_i} \sgn ([F_i],\p [\bar{F}_i]),
f([F_{i+1}])=k_{\bar{F}_i} \sgn ([F_{i+1}],\p [\bar{F}_i]).
\end{equation}
Now multiplying the equalities in (\ref{cycle}) over all $i \in [k]$, we have
$$ \prod_{i \in [k]} f([F_i])^2 =  \prod_{i \in [k]} k_{\bar{F}_i}^2 \cdot \sgn C,$$
where $\sgn C$ denotes the sign the cycle $C$.
So we have $\sgn C=1$, and then $B_i(K',\s)$ is balanced by definition.

($\Leftarrow$) Let $K'$ be an $(i+1)$-path connected component of $K$ such that $B_i(K',\s)$ is balanced.
Then  $B_i(K',\s)$ can be switched to $B_i(K',+)$ by a sequence of switching on the vertices of $B_i(K')$ (namely, the $i$-faces and/or $(i+1)$-faces of $K'$).
By Lemma \ref{reverse}, there exists an orientation $\tau$ of the $i$-faces and/or $(i+1)$-faces of $K'$ such that  $B_i(K',\tau)$ has all edges with positive signs.
Now let $f$ be an function defined on $(K',\tau)$ such that $f([F])=1$ for all $F \in S_i(K')$ under the orientation of $\tau$.
Now by the formula (\ref{upp}) of $\Delta_i^{up}(K,\tau) f$, we have
\begin{align*}
(\Delta_i^{up}(K,\tau) f)([F]) & =\sum_{\bar{F} \in S_{i+1}(K'): \atop F \in \p \bar{F}} \frac{w(\bar{F})}{w(F)} f([F]) + \!\!\! \sum_{F' \in S_i(K'): F'\ne F, \atop F \cup F'=\bar{F} \in S_{i+1}(K')}\!\!\!
\frac{w(\bar{F})}{w(F)} \sgn([F], \p [\bar{F}]) \sgn([F'], \p [\bar{F}]) f([F'])\\
&=\sum_{\bar{F} \in S_{i+1}(K'): \atop F \in \p \bar{F}} \frac{w(\bar{F})}{w(F)}
+\!\!\! \sum_{F' \in S_i(K'): F'\ne F, \atop F \cup F'=\bar{F} \in S_{i+1}(K')}\!\!\!
\frac{w(\bar{F})}{w(F)} \\
&= \sum_{\bar{F} \in S_{i+1}(K'): \atop F \in \p \bar{F}} \frac{w(\bar{F})}{w(F)}
+ (i+1)\!\!\!\sum_{\bar{F} \in S_{i+1}(K'): F \in \p \bar{F}}
\frac{w(\bar{F})}{w(F)} \\
&=i+2.
\end{align*}
So we have $\Delta_i^{up}(K',\tau) f=(i+2) f$, which implies that $i+2$ is an eigenvalue of
$\Delta_i^{up}(K',\tau)$.
By Lemma \ref{reverse}, the spectrum of $\Delta_i^{up}(K')$ is independent of the orientation.
So $\Delta_i^{up}(K')$ has an eigenvalue $i+2$.
As $\Delta_i^{up}(K')$ is a direct summand of $\Delta_i^{up}(K)$, $\Delta_i^{up}(K)$ has an eigenvalue $i+2$ (namely the largest eigenvalue).
\end{proof}

\begin{cor}\label{multi}
Let $K$ be a complex with an orientation $\s$.
Then the multiplicity of $i+2$ as an eigenvalue of $\Delta_i^{up}(K)$ is equal to the number of $(i+1)$-path connected component $K'$ of $K$ with $B_i(K',\s)$ being balanced.
\end{cor}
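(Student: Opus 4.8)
The plan is to reduce to the single-component case and then count the dimension of the $(i+2)$-eigenspace inside each component. First I would note that $\Delta_i^{up}(K)$ is block-diagonal for the orthogonal decomposition of $C^i(K,\R)$ into the subspaces spanned by the elementary cochains $[F]^*$ with $F$ ranging over the $i$-faces of a fixed $(i+1)$-path connected component $K'$ (the $i$-faces lying in no $(i+1)$-face forming singleton blocks). This is because, by (\ref{upp}), the value $(\Delta_i^{up}(K)f)([F])$ depends only on $f([F])$ and on $f([F'])$ for those $F'$ sharing an $(i+1)$-coface $\bar F=F\cup F'$ with $F$; since all such cofaces lie in the same component as $F$, the weights of (\ref{deg}) computed in $K$ and in $K'$ agree, and the block indexed by $K'$ is exactly $\Delta_i^{up}(K')$. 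Therefore the spectrum of $\Delta_i^{up}(K)$ is the union, counted with multiplicity, of the spectra of the operators $\Delta_i^{up}(K')$, and the multiplicity of $i+2$ for $\Delta_i^{up}(K)$ is the sum over $K'$ of its multiplicity for $\Delta_i^{up}(K')$; the singleton blocks contribute only the eigenvalue $0$ and are irrelevant since $i+2>0$. Moreover, if $B_i(K',\s)$ is not balanced, then Theorem \ref{main-result} applied to $K'$ shows that $i+2$ is not an eigenvalue of $\Delta_i^{up}(K')$, so its contribution is $0$. It thus remains to prove that a balanced $(i+1)$-path connected component contributes exactly $1$.

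For such a component $K'$ I would show the $(i+2)$-eigenspace of $\Delta_i^{up}(K')$ is one-dimensional. By Lemma \ref{balance} and Lemma \ref{reverse} I may pass to an orientation $\tau$ in which every edge of $B_i(K',\tau)$ is positive, since this only conjugates $\Delta_i^{up}(K')$ by a signature matrix and leaves its spectrum unchanged. The backward direction of Theorem \ref{main-result} already produces the all-ones cochain $\mathbf 1$ as an eigenfunction for $i+2$, so the multiplicity is at least $1$. For the opposite inequality, let $f$ be any eigenfunction with eigenvalue $i+2$. The Cauchy--Schwarz equality computation (\ref{i+2}) uses only that the eigenvalue equals $i+2$, so it applies to $f$ and yields, as in (\ref{kf}), constants $k_{\bar F}$ with $f([F])=k_{\bar F}\,\sgn([F],\p[\bar F])=k_{\bar F}$ for every $F\in\p\bar F$, the last step because all signs in $\tau$ are positive. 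Thus $f$ is constant on the boundary $i$-faces of each $(i+1)$-face; two $(i+1)$-faces sharing an $i$-face must then carry the same constant, and the $(i+1)$-path connectivity of $K'$ propagates one common value to all $i$-faces of $K'$ lying in some $(i+1)$-face. Hence $f$ is a scalar multiple of $\mathbf 1$, the eigenspace is one-dimensional, and the count is complete.

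The step needing the most care is this last propagation: I must confirm that the equality analysis of (\ref{i+2}) is valid for an arbitrary $(i+2)$-eigenfunction and not merely the maximizer, and that the constants $k_{\bar F}$ indeed agree along every $(i+1)$-path, which is exactly where the correspondence between $(i+1)$-path connectivity of $K'$ and connectivity of $B_i(K')$ enters. A minor but necessary bookkeeping point is to ensure that $i$-faces contained in no $(i+1)$-face, and components without $(i+1)$-faces, are not erroneously counted as balanced components contributing to the multiplicity.
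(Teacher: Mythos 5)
Your proposal is correct and follows essentially the same route as the paper: block-diagonalize $\Delta_i^{up}(K)$ over the $(i+1)$-path connected components (with a zero block for isolated $i$-faces), discard unbalanced components via Theorem \ref{main-result}, and for a balanced component switch to an all-positive orientation and rerun the equality analysis of (\ref{i+2})--(\ref{kf}) to see that every $(i+2)$-eigenfunction is constant, giving multiplicity one per balanced component. Your extra care about why the Cauchy--Schwarz equality argument applies to an arbitrary $(i+2)$-eigenfunction is exactly the point the paper compresses into ``a similar discussion as (\ref{kfCom})'', and it is handled correctly.
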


\begin{proof}
Let $K_1,\ldots, K_s$ be all $(i+1)$-path connected components of $K$, where $s \ge 1$.
Then, by a relabelling  of $i$-faces of $K$, we have
$$ \Delta_i^{up}(K)=\Delta_i^{up}(K_1)\oplus \cdots \oplus \Delta_i^{up}(K_s) \oplus O,$$
where $O$ is a zero matrix corresponding to those $i$-faces which are not in any $(i+1)$-faces of $K$.
If $i+2$ is an eigenvalue of $\Delta_i^{up}(K)$, then it is an eigenvalue of $\Delta_i^{up}(K_t)$ for some $t$, and $B_i(K_t, \s)$ is balanced by Theorem \ref{main-result} .
Then there exists an orientation $\tau$ of $i$- and/or $(i+1)$-faces of $K_t$ such that
$B_i(K_t, \tau)$ has all edges positive.
By (\ref{kf}) and a similar discussion as (\ref{kfCom}), if $f$ is an eigenfunction of  $\Delta_i^{up}(K_t,\tau)$, then $f$ is constant on all $i$-faces of $K_t$.
So the multiplicity of $i+2$ as an eigenvalue of $\Delta_i^{up}(K_t,\tau)$ is $1$.
On the other side, if  $K_t$ is an $(i+1)$-path connected components of $K$ such that  $B_i(K_t, \s)$ is balanced, also by Theorem \ref{main-result}, $i+2$ is an eigenvalue of $\Delta_i^{up}(K_t,\s)$ with multiplicity $1$.
The result now follows.
\end{proof}

Observe that for a $1$-path connected complex $K$, $B_0(K,\s)$ is balanced if and only if the graph $K^{(1)}$ is bipartite. 
So, an immediately conclusion of Corollary \ref{multi} on graph case is:
the multiplicity of $2$ as a normalized Laplaican eigenvalue of $G$ is the number of connected bipartite components of $G$ (see \cite[Chapter 1]{Chung97}).
Our result also implies Horak and Jost's characterization on the eigenvalue $i+2$ (Theorem \ref{jost}).
We need the following notion on orientable complexes.

\begin{defi}\cite{HJ13B}
Let $K$ be a pure $(i+1)$-dimensional simplicial complex. We say that $K$ is \emph{orientable} if and only if it is possible to assign an orientation to all $(i+1)$-faces of $K$ in such a way that any two simplices that intersect in an $i$-face induce a different orientation on that face.
\end{defi}
%
%

\begin{thm}
For an $(i+1)$-path connected simplicial complex $K$, the following statements are equivalent:

$(1)$ the $i$-th incidence signed graph $B_i(K)$ is balanced;

$(2)$ there are no $(i+1)$-orientable circuits of odd length nor $(i+1)$-non-orientable circuits of even length in $K$.
\end{thm}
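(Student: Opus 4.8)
The plan is to prove the equivalence purely combinatorially, relating the sign of a cycle of $B_i(K)$ to the orientability and parity of the corresponding $(i+1)$-circuit; combined with Theorem \ref{main-result} this re-derives Theorem \ref{jost} without circularity, so I would deliberately \emph{not} invoke Theorem \ref{jost} in the argument. By definition $B_i(K)$ is balanced if and only if every cycle of $B_i(K)$ has positive sign. Since $B_i(K)$ is bipartite with parts $S_i(K)$ and $S_{i+1}(K)$, every cycle $C$ has the alternating form $F_1,\bar{F}_1,F_2,\bar{F}_2,\dots,F_k,\bar{F}_k,F_1$ with $F_j,F_{j+1}\in\p\bar{F}_j$ (indices mod $k$), and two distinct $(i+1)$-faces meet in at most one $i$-face. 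First I would reduce to the case where $C$ corresponds to an $(i+1)$-circuit, i.e.\ where no two non-consecutive faces $\bar{F}_j,\bar{F}_{j'}$ share an $i$-face. If such a common $i$-face $G$ exists, then $\bar{F}_j-G-\bar{F}_{j'}$ is a chord path splitting $C$ into two shorter cycles $C_1,C_2$; since the two chord edges occur in both, $\sgn C=\sgn C_1\,\sgn C_2$. Iterating, the sign of an arbitrary cycle is a product of signs of circuit-cycles, so $B_i(K)$ is balanced if and only if every cycle arising from an $(i+1)$-circuit is positive.

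Next I would compute, for a cycle $C$ coming from an $(i+1)$-circuit of length $k$,
\[
\sgn C=\prod_{j=1}^{k}\sgn([F_j],\p[\bar{F}_j])\,\sgn([F_{j+1}],\p[\bar{F}_j]),
\]
and connect it to orientability of the subcomplex $L$ generated by $\bar{F}_1,\dots,\bar{F}_k$. Because in a circuit only consecutive faces share an $i$-face, $L$ is orientable exactly when one can choose reorientations $\bar{F}_j\mapsto\epsilon_j\bar{F}_j$ with $\epsilon_j\in\{1,-1\}$ so that every consecutive pair induces opposite orientations on its common $i$-face $F_{j+1}$, namely $\epsilon_j\sgn([F_{j+1}],\p[\bar{F}_j])=-\epsilon_{j+1}\sgn([F_{j+1}],\p[\bar{F}_{j+1}])$. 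This yields the recursion $\epsilon_{j+1}=-\,d_j\,\epsilon_j$ with $d_j=\sgn([F_{j+1}],\p[\bar{F}_j])\sgn([F_{j+1}],\p[\bar{F}_{j+1}])$, and going once around the cycle the system is consistent if and only if $(-1)^k\prod_{j=1}^k d_j=1$. A short reindexing shows $\prod_{j=1}^k d_j=\sgn C$, so $L$ is orientable if and only if $\sgn C=(-1)^k$.

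Finally I would read off the four cases: an orientable circuit of length $k$ gives $\sgn C=(-1)^k$, hence a negative cycle exactly when $k$ is odd, whereas a non-orientable circuit gives $\sgn C=(-1)^{k+1}$, a negative cycle exactly when $k$ is even. Thus $B_i(K)$ contains a negative cycle if and only if $K$ has an orientable circuit of odd length or a non-orientable circuit of even length, which is precisely the negation of $(2)$; equivalently, $(1)$ holds if and only if $(2)$ holds. The main obstacle I anticipate is the orientability computation of the second paragraph: one must verify carefully that orientability of the circuit's subcomplex is governed \emph{solely} by the consecutive-pair conditions (this uses the defining property of a circuit that non-consecutive faces share no $i$-face) and then match the cyclic consistency condition $(-1)^k\prod_j d_j=1$ with the telescoping identity $\prod_j d_j=\sgn C$. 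The chord-splitting reduction in the first paragraph is more routine but still requires care to ensure the sub-cycles are simple and that the repeated chord edges cancel in the sign.
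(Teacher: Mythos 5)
Your proof is correct, and its core computation is the same as the paper's: both arguments reduce everything to the sign identity $\sgn C=\prod_j \sgn([F_j],\p[\bar F_j])\sgn([F_{j+1}],\p[\bar F_j])$ for an alternating cycle of $B_i(K)$, and both observe that reorienting the $(i+1)$-faces along the cycle (your $\epsilon_j$ recursion; the paper's successive normalization of the first $t-1$ consecutive products to $-1$) turns orientability into the single consistency condition $\sgn C=(-1)^k$. The genuine difference is your first paragraph. The paper's ($\Leftarrow$) direction starts from an arbitrary negative cycle of $B_i(K)$ and immediately treats its $(i+1)$-faces as an $(i+1)$-circuit, which by the paper's definition requires that \emph{non-consecutive} faces of the cycle share no $i$-face; a general cycle of $B_i(K)$ need not satisfy this. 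Your chord-splitting reduction (splitting along a path $\bar F_j$--$G$--$\bar F_{j'}$ or along a chord edge, with $\sgn C=\sgn C_1\,\sgn C_2$ because the shared edges contribute a square) explicitly fills this step, at the cost of checking that the sub-cycles are simple and strictly shorter so the induction terminates -- both of which hold since non-consecutive $\bar F_j,\bar F_{j'}$ bound two arcs of length at least four. So your write-up is slightly longer but is the more complete argument; the paper's version is shorter because it tacitly assumes the negative cycle can be taken to be a circuit-cycle. One small point to make explicit when you formalize paragraph two: orientability of the subcomplex spanned by the circuit is equivalent to your consecutive-pair conditions precisely because, by the circuit property, consecutive pairs are the \emph{only} pairs meeting in an $i$-face -- you flag this yourself, and it is exactly where the reduction of paragraph one gets used.
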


\begin{proof}
($\Rightarrow$) Assume to the contrary that there exists an $(i+1)$-orientable circuit $C$ of odd length. By definition, there exists $2s+1$ $(i+1)$-faces $\bar{F}_1, \bar{F}_2,\ldots,\bar{F}_{2s+1}$ such that $F_i:=\bar{F}_i \cap \bar{F}_{i+1} \in S_i(K)$ for $i \in [2s+1]$, where $\bar{F}_{2s+2}=\bar{F_1}$.
Also, there exists an orientation of $K$ such that for all $i\in [2s+1]$,
$$ \sgn([F_i], \p [\bar{F}_i])\sgn ([F_i], \p [\bar{F}_{i+1}])=-1.$$
Now the signed graph $B_i(K,\s)$ contains a cycle $C$:
$$ \bar{F}_1, F_1, \bar{F}_2, F_2, \bar{F}_3, \cdots, \bar{F}_{2s+1}, F_{2s+1}, \bar{F}_1.$$
The sign of $C$ is
$$ \sgn C=\prod_{i=1}^{2s+1} \sgn([F_i], \p [\bar{F}_i])\sgn ([F_i], \p [\bar{F}_{i+1}])
=(-1)^{2s+1}=-1.$$
By definition $B_i(K,\s)$ is not balanced; a contradiction.

If $K$ has an $(i+1)$-non-orientable circuit of even length, say
$\bar{F}_1, \bar{F}_2,\ldots,\bar{F}_{2s}$, where $F_i:=\bar{F}_i \cap \bar{F}_{i+1} \in S_i(K)$ for $i \in [2s]$ ($\bar{F}_{2s+1}=\bar{F_1}$).
There exists an orientation of $K$ such that for all $i\in [2s-1]$,
$$ \sgn([F_i], \p [\bar{F}_i])\sgn ([F_i], \p [\bar{F}_{i+1}])=-1$$
and
$$  \sgn([F_{2s}], \p [\bar{F}_{2s}])\sgn ([F_{2s}], \p [\bar{F}_1])=1.$$
So, the sign of $C$ is
$$ \sgn C=\prod_{i=1}^{2s} \sgn([F_i], \p [\bar{F}_i])\sgn ([F_i], \p [\bar{F}_{i+1}])
=(-1)^{2s-1}=-1,$$
which also yields a contradiction to the balancedness of $B_i(K,\s)$.


($\Leftarrow$) To the contrary assume that $B_i(K,\s)$ is unbalanced.
Then $B_i(K,\s)$ contains a negative cycle
$$ \bar{F}_1, F_1, \bar{F}_2, F_2, \cdots, F_{t-1}, \bar{F}_t, F_t, \bar{F}_1,$$
with
$$ \sgn C=\prod_{i=1}^{t} \sgn([F_i], \p [\bar{F}_i])\sgn ([F_i], \p [\bar{F}_{i+1}])=-1,$$
where $F_i:=\bar{F}_i \cap \bar{F}_{i+1} \in S_i(K)$ for $i \in [t]$ ($\bar{F}_{t+1}=\bar{F_1}$).
We can assign new orientations to $\bar{F}_i$ for $i \in [t]$ such that for
$$\sgn([F_i], \p [\bar{F}_i])\sgn ([F_i], \p [\bar{F}_{i+1}])=-1, ~i\in [t-1].$$
Note that the sign of $C$ is unchanged after the above orientations.
So
$$ \sgn C=(-1)^{t-1} \sgn([F_t], \p [\bar{F}_t])\sgn ([F_t], \p [\bar{F}_1])=-1.$$
Hence, if $t$ is odd, then
$$ \sgn([F_t], \p [\bar{F}_t])\sgn ([F_t], \p [\bar{F}_1])=-1,$$
implying that $C$ is an $(i+1)$-orientable circuit of odd length; a contradiction.
If $t$ is even, then
$$ \sgn([F_t], \p [\bar{F}_t])\sgn ([F_t], \p [\bar{F}_1])=1,$$
implying that $C$ is an $(i+1)$-non-orientable circuit of even length; also a contradiction.
%
%
%
\end{proof}

\section{Complexes with the $i$-th normalized Laplacian having eigenvalue $i+2$}
For an $(i+1)$-path connected complex $K$, we  proved that
the largest eigenvalue of $\Delta_i^{up}(K)$ is $i+2$ if and only if the $i$-th signed incidence graph $B_i(K)$ of $K$ is balanced.
Fan, Wu and Wang \cite{FWW} characterized the balancedness of the signed incidence graph of complexes under operations such as wedge sum, join, Cartesian product and duplication of motifs.
In this section, by using the above operations on complexes we can construct infinitely many complexes $K$ for which the largest eigenvalue of $\Delta_i^{up}(K)$ is $i+2$.

\subsection{Wedge}
\begin{defi}\cite{HJ13B}
Let $K_1$ and $K_2$ be complexes on disjoint vertex sets $V(K_1)$ and $V(K_2)$, respectively.
Let $F_1=\{v_0,\ldots, v_k\} \in S_k(K_1)$ and $F_2=\{u_0,\ldots, u_k\} \in S_k(K_2)$.
The combinational \emph{$k$-wedge sum} of $K_1$ and $K_2$ is a complex obtained from $K_1 \cup K_2$ by identifying  $F_1$ with $F_2$ such that $v_i$ is identified with $u_i$ for $i=0,1,\ldots,k$.
\end{defi}

The above definition generalizes in an obvious way to the $k$-wedge sum of arbitrary many complexes.
If $k \ge 1$, the $k$-wedge sum depends on the ways of identification of vertices. In fact, there exists a bijection $\phi: F_1 \to F_2$ such that $v \in F_1$ is identified with $\phi(v) \in F_2$.

\begin{thm}\label{wedge}\cite{FWW}
Let $K=K_{1}\vee_k K_2$ be a $k$-wedge sum of complexes $K_1$ and $K_2$ by identifying a $k$-face $F_1$ of $K_1$ and a $k$-face $F_2$ of $K_2$.
If $i \ge k-1$, the $i$-th signed incidence graph $B_i(K)$ of $K$ is balanced if and only if $B_i(K_1)$ and $B_i(K_2)$ both are balanced.
\end{thm}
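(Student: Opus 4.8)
The plan is to combine the switching characterization of balancedness (Lemma \ref{balance}) with a careful description of $B_i(K)$ as the union of $B_i(K_1)$ and $B_i(K_2)$ glued along a shared subgraph. I would first fix one orientation $\s$ on $K$; it restricts to compatible orientations on $K_1$ and $K_2$ that agree on the identified $k$-face $\bar F := F_1 = F_2$, so the edge signs on the overlap are unambiguous and balancedness may be read off this fixed signing. The key structural step is to identify the overlap precisely. Because the wedge only identifies subfaces of $\bar F$, a face of $K$ lies in both $K_1$ and $K_2$ exactly when it is a subface of $\bar F$; and for $B_i$ only the $i$- and $(i+1)$-dimensional shared faces matter. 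Under the hypothesis $i \ge k-1$ this overlap is therefore very small: if $i \ge k+1$ there are no shared faces and $B_i(K) = B_i(K_1) \sqcup B_i(K_2)$; if $i = k$ the sole shared face is $\bar F$, a single common vertex; and if $i = k-1$ the shared faces are $\bar F$ together with its $k+1$ boundary $(k-1)$-faces, so the overlap is the star with centre $\bar F$. In each case the shared subgraph $T$ is empty or connected, which is exactly what $i \ge k-1$ guarantees (for $i \le k-2$ the overlap would be the $i$-incidence graph of the boundary of the simplex $\bar F$, which carries cycles).

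Along the way I would note two bookkeeping facts. Every edge of $B_i(K)$ is an edge of $B_i(K_1)$ or of $B_i(K_2)$: such an edge is an incidence $(F, \bar G)$ with $\bar G \in S_{i+1}(K)$ lying in $K_1$ or $K_2$, which forces $F$ and the whole incidence into the same piece. And $B_i(K_1), B_i(K_2)$ sit inside $B_i(K)$ as signed subgraphs with the induced signs. The forward implication is then immediate: any cycle of $B_i(K_j)$ is a cycle of $B_i(K)$ with the same product of signs, so if $B_i(K)$ is balanced then so is each $B_i(K_j)$.

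The converse is the substantive direction, and I would prove it by switching. Assuming both $B_i(K_1)$ and $B_i(K_2)$ balanced, Lemma \ref{balance} supplies switchings $\eta_1, \eta_2$ taking them to the all-positive signature. If $T$ is empty the two switchings combine with no constraint; if $T$ is connected, then $\eta_1$ and $\eta_2$ each switch the common induced signing of $T$ to all-positive, and since a switching making a fixed signed graph all-positive is unique up to a global sign on a connected graph, $\eta_1|_T = \pm\, \eta_2|_T$. Replacing $\eta_2$ by $-\eta_2$ when needed (which does not disturb $B_i(K_2)$, as a global sign change leaves every edge sign fixed) we may assume $\eta_1$ and $\eta_2$ agree on every shared vertex, so they glue to a single switching $\eta$ of $B_i(K)$.

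By the first bookkeeping fact $\eta$ makes every edge of $B_i(K)$ positive, and Lemma \ref{balance} then yields that $B_i(K)$ is balanced. I expect the main obstacle to be the borderline case $i = k-1$, where the overlap is a genuine star rather than a lone vertex: one must check that no new incidences are created across the two pieces and that the reconciliation of $\eta_1$ and $\eta_2$ still succeeds. The argument survives precisely because the star is connected, which keeps the ``unique up to a global sign'' identification of $\eta_1|_T$ and $\eta_2|_T$ available; this connectedness is the single property on which the whole hypothesis $i \ge k-1$ turns.
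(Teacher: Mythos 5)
The paper does not actually prove this statement: Theorem \ref{wedge} is imported from \cite{FWW} (where it appears with its own proof), so there is no in-paper argument to compare yours against. Judged on its own, your proof is correct and complete for the stated range $i\ge k-1$. The structural analysis is right: a face of $K_1\vee_k K_2$ lies in both pieces exactly when it is a subface of the identified $k$-face $\bar F$, so the overlap of $B_i(K_1)$ and $B_i(K_2)$ inside $B_i(K)$ is empty for $i\ge k+1$, the single vertex $\bar F$ for $i=k$, and the star on $\bar F$ and its $k+1$ boundary $(k-1)$-faces for $i=k-1$; and every edge of $B_i(K)$ lives entirely in one piece because an $(i+1)$-face determines which piece its incidences belong to. The forward implication by restricting cycles is immediate, and the converse via Lemma \ref{balance} is handled correctly: you rightly observe that the edge signs on the overlap are unambiguous because both restrictions come from one global orientation $\s$, and that two switchings trivializing the same connected signed graph differ by a global sign, so $\eta_1$ and $\eta_2$ can be reconciled on the (empty or connected) overlap and glued. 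One cosmetic remark: the same gluing argument only needs the overlap to be \emph{connected}, which also holds for $i\le k-2$ (where the overlap is the connected $i$-th incidence graph of the simplex $\bar F$); the hypothesis $i\ge k-1$ in the statement is therefore not what makes your converse work, and the restriction in \cite{FWW} is best read as confining attention to the nondegenerate range --- for $i\le k-2$ the presence of the full $k$-simplex in both pieces tends to force unbalancedness on all three graphs at once. This does not affect the validity of your proof where the theorem is asserted.
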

\begin{thm}\label{wedge-eigenvalue}
Let $K=K_{1}\vee_k K_2$ be a $k$-wedge sum of complexes $K_1$ and $K_2$ by identifying a $k$-face $F_1$ of $K_1$ and a $k$-face $F_2$ of $K_2$, where $K_1,K_2$ are both $(i+1)$-path connected. Then the following results hold.

$(1)$ If $i \in \{k-1,k\}$, then $K$ is $(i+1)$-path connected, and $\lamax(\Delta_i^{up}(K))=i+2$ if and only if $B_i(K_1)$ and $B_i(K_2)$ both are balanced.

$(2)$ If $i \ge k+1$, then $K$ is not $(i+1)$-path connected, and $\lamax(\Delta_i^{up}(K))=i+2$ if and only if one of $B_i(K_1)$ and $B_i(K_2)$ is balanced.
\end{thm}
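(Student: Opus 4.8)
The plan is to reduce the statement to two results already in hand: Theorem~\ref{main-result}, which says $\lamax(\Delta_i^{up}(K))=i+2$ precisely when some $(i+1)$-path connected component of $K$ has a balanced $i$-th signed incidence graph, and Theorem~\ref{wedge}, which (valid here since $i\ge k-1$ in every case) equates balancedness of $B_i(K)$ with simultaneous balancedness of $B_i(K_1)$ and $B_i(K_2)$. So the genuine work is combinatorial: to determine the $(i+1)$-path connected components of $K$ in each range of $i$, and to check that the sub-incidence-graphs $B_i(K_1),B_i(K_2)$ sitting inside $B_i(K)$ are the standalone ones, after which the two theorems do the rest.

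For Part~(2), with $i\ge k+1$, I would first note that the wedge identifies only faces of dimension at most $k<i$, hence shares no $i$-face and no $(i+1)$-face; thus $S_i(K)$ and $S_{i+1}(K)$ are the disjoint unions of those of $K_1$ and $K_2$, and $B_i(K_1),B_i(K_2)$ appear inside $B_i(K)$ exactly as when taken alone. Because no $i$-face bridges the two pieces, no $(i+1)$-face of $K_1$ is $(i+1)$-adjacent to one of $K_2$, so the two pieces stay separate and $K_1,K_2$ are precisely the $(i+1)$-path connected components of $K$ carrying $(i+1)$-faces. Theorem~\ref{main-result} then yields $\lamax(\Delta_i^{up}(K))=i+2$ iff one of these components is balanced, i.e. iff one of $B_i(K_1),B_i(K_2)$ is balanced.

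For Part~(1) the first task is to prove $K$ is $(i+1)$-path connected. When $i=k-1$ the identified $k$-face becomes a single shared $(i+1)$-face $\bar F$; since each $K_j$ is $(i+1)$-path connected, every $(i+1)$-face of $K_j$ reaches $\bar F$ by an $(i+1)$-path inside $K_j$, and concatenating these through $\bar F$ connects all $(i+1)$-faces of $K$. When $i=k$ the identified cell is instead a shared $i$-face $F$, and the bridge must come from an $(i+1)$-face of $K_1$ and an $(i+1)$-face of $K_2$ that both contain $F$ in their boundary; these are $(i+1)$-adjacent across the wedge, so again $K$ is $(i+1)$-path connected. Once this is established, $K$ is its own $(i+1)$-path connected component up to isolated $i$-faces lying in no $(i+1)$-face (which carry no cycles), so Theorem~\ref{main-result} reduces the eigenvalue condition to balancedness of $B_i(K)$, and Theorem~\ref{wedge} rewrites this as balancedness of both $B_i(K_1)$ and $B_i(K_2)$.

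I expect the main obstacle to lie in the $(i+1)$-path connectivity for the sub-case $i=k$. Unlike $i=k-1$, where the shared cell is itself an $(i+1)$-face and automatically sits in the $(i+1)$-adjacency structure, here the shared cell is only an $i$-face, and it bridges the two complexes only if it genuinely lies in the boundary of some $(i+1)$-face on each side. I would verify this carefully — it is immediate for complexes that are pure of dimension $i+1$, and more generally requires the shared $i$-face to lie in an $(i+1)$-face of each piece — and confirm that passing to $K$ does not alter the standalone signed incidence graphs, so that Theorems~\ref{main-result} and~\ref{wedge} apply without distortion.
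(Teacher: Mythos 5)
Your proposal is correct and follows essentially the same route as the paper: determine the $(i+1)$-path connected components of the wedge in each range of $i$ (the paper phrases this via connectivity of the incidence graph $B_i(K)$, you phrase it directly via $(i+1)$-paths), and then invoke Theorem~\ref{main-result} together with Theorem~\ref{wedge}. The subtlety you flag in the case $i=k$ --- that the identified $i$-face must actually lie in the boundary of some $(i+1)$-face on each side for the bridge to exist --- is a genuine point, but it is passed over silently in the paper's own proof as well, so your treatment is if anything slightly more careful.
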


\begin{proof}
Note that $K$ is $(i+1)$-path connected if and only if $B_i(K)$ is connected.
If $i =k$, then $B_i(K)$ is a $0$-wedge sum of $B_i(K_1)$ and $B_i(K_2)$ by identifying $F_1$ with $F_2$.
As $B_i(K_1)$ and $B_i(K_2)$ are both connected, $B_i(K)$ is also connected.
If $i=k-1$, then  $B_i(K)$ is obtained from $B_i(K_1)$ and $B_i(K_2)$  by identifying $F_1$ and $F_2$ together with those $(k-1)$-faces in $\p F_1$ and their corresponding faces in $\p F_2$.
So $B_i(K)$ is also connected in this case.
By Theorem \ref{main-result}, $\lamax(\Delta_i^{up}(K))=i+2$ if and only if $B_i(K)$ is balanced.
The result follows from Theorem \ref{wedge}.

If $i \ge k+1$, then $B_i(K)$ is a disjoint union of $B_i(K_1)$ and $B_i(K_2)$, and hence
$\Delta_i^{up}(K)$ is a direct sum of $\Delta_i^{up}(K_1)$ and $\Delta_i^{up}(K_2)$.
So $\lamax(\Delta_i^{up}(K))=i+2$ if and only if $\lamax(\Delta_i^{up}(K_1))=i+2$ or $\lamax(\Delta_i^{up}(K_2))=i+2$.
The result  follows from Theorem \ref{main-result}.
\end{proof}


Fan, Wu and Wang \cite{FWW} proved that for each integer $i \ge 0$, there exist infinitely many $(i+1)$-path connected acyclic complexes $K$ with $B_i(K)$ being balanced.
Similarly, start from an $(i+1)$-simplex $\s$ and let $K_0=\s$, $K_p=K_{p-1}\vee_i K_0$ for $p \ge 1$. We will get a sequence of complexes $\{K_p\}_{p \ge 0}$.
Then $K_p$ is $(i+1)$-path connected, balanced and acyclic; see \cite[Corollary 4.5]{FWW}.
So we have the following result immediately.

\begin{cor}
For each integer $i \ge 0$, there exist infinitely many $(i+1)$-path connected acyclic complexes $K$ such that $\lamax(\Delta_i^{up}(K))=i+2$.
\end{cor}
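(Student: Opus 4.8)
The plan is to use the explicit family already assembled immediately before the statement, namely the sequence $\{K_p\}_{p \ge 0}$ with $K_0=\s$ a fixed $(i+1)$-simplex and $K_p=K_{p-1}\vee_i K_0$ for $p \ge 1$, and then to read off the eigenvalue $i+2$ directly from Theorem \ref{main-result}. The essential structural input has already been provided: by \cite[Corollary 4.5]{FWW} each $K_p$ is simultaneously $(i+1)$-path connected, acyclic, and has $B_i(K_p)$ balanced. Thus the only remaining tasks are to convert balancedness into the spectral conclusion and to verify that the family is genuinely infinite.

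First I would record the base case. For the $(i+1)$-simplex $\s$, the graph $B_i(K_0)$ is the star joining the unique $(i+1)$-face of $\s$ to its $i+2$ boundary $i$-faces; being a tree it has no cycles and is therefore trivially balanced, while $\s$ is contractible and hence acyclic. This anchors the induction behind \cite[Corollary 4.5]{FWW}: since the identification in $K_p=K_{p-1}\vee_i K_0$ is along a $k$-face with $k=i$, so that $i \ge k-1$ holds, Theorem \ref{wedge} guarantees that $B_i(K_p)$ stays balanced as long as both factors are balanced, and the wedge keeps the complex $(i+1)$-path connected. I would simply invoke this, together with the acyclicity clause of \cite[Corollary 4.5]{FWW}, to conclude that every $K_p$ enjoys all three properties.

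Next I would apply Theorem \ref{main-result}. Because $K_p$ is $(i+1)$-path connected it coincides with its unique $(i+1)$-path connected component, and $B_i(K_p)$ is balanced; hence $\lamax(\Delta_i^{up}(K_p))=i+2$. (Alternatively, one could reach the same conclusion by an induction on $p$ using Theorem \ref{wedge-eigenvalue}(1) in the case $k=i$, but this route still borrows acyclicity from \cite{FWW}.) To see that the family contains infinitely many pairwise non-isomorphic members, I would count facets: each wedge step glues on exactly one more $(i+1)$-simplex along an $i$-face, so $K_p$ has precisely $p+1$ facets and the $K_p$ are distinct for distinct $p$. This finishes the argument.

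The genuinely substantive content — that iterated $i$-wedges of simplices remain both acyclic and incidence-balanced — is exactly what \cite[Corollary 4.5]{FWW} supplies, so the main obstacle does not lie in the present corollary but in importing those facts correctly. The one point demanding real care is acyclicity: wedging along an $i$-face with $i$ below the top dimension can in principle create homology, and it is precisely the rigidity of gluing simplices, rather than arbitrary complexes, that keeps $\tilde{H}_j(K_p)=0$ in every degree $j$. Everything else is a direct application of Theorem \ref{main-result} together with the facet count.
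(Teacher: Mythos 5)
Your proposal is correct and follows essentially the same route as the paper: take the iterated $i$-wedge family $K_0=\s$, $K_p=K_{p-1}\vee_i K_0$, import $(i+1)$-path connectedness, acyclicity and balancedness of $B_i(K_p)$ from \cite[Corollary 4.5]{FWW}, and conclude via Theorem \ref{main-result}. The extra details you supply (the base case, the $i\ge k-1$ check for Theorem \ref{wedge}, and the facet count showing the $K_p$ are pairwise non-isomorphic) are all sound and merely make explicit what the paper leaves implicit.
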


\subsection{Cartesian product}
\begin{defi}
Let $K_1$ and $K_2$ be complexes with vertex sets $V(K_1)$ and $V(K_2)$ respectively.
The \emph{Cartesian product} $K_1\Box  K_2$ of $K_1$ and $K_2$ is a complex with  vertex set $V(K_1)\times V(K_2)$, whose faces are $F\times v:=F\times \{v\}=\{(u_0,v),\ldots,(u_s,v)\}$ if  $F=\{u_0,\ldots,u_s\}\in K_1$ and $v\in V(K_2)$, or $u\times F':=\{u\}\times F'=\{(u,v_0),\ldots,(u,v_t)\}$ if $u\in V(K_1)$ and $F'=\{v_0,\ldots,v_t\}\in K_2$.
\end{defi}

An orientation of $F$ in $K_1$ induces an orientation of $F\times v$ in $K_1 \Box K_2$.
If $[F]=[u_0,\ldots,u_s]\in K_1$, then
$[F\times v]:=[(u_0,v),\ldots,(u_s,v)]$.
So
$\sgn([F\times v], \partial[\bar{F}\times v])=\sgn([F],\partial[\bar{F}]).$
Similarly, an orientation of $F'$ in $K_2$ induces an orientation of $u\times F'$.
Given weight functions $w_1$ on $K_1$ and $w_2$ on $K_2$.
We define the weight $w$ on $K_1\Box K_2$ such that for any $F \in K_1$ and $F' \in K_2$,
\begin{equation}\label{Cart-weight}
w(F\times v):=w_1(F), ~ w(u\times F'):=w_2(F').
\end{equation}
This implies a condition on $w_1$ and $w_2$, namely, $w(u,v)=w_1(u)=w_2(v)$ for any vertex $u$ of $K_1$ and $v$ of $K_2$.
\begin{thm}\cite{FWW}\label{cart}
Let $K_1, K_2$ be $(i+1)$-path connected complexes.
Then $B_i(K_1\Box K_2)$ is balanced if and only if $B_i(K_1)$ and $B_i(K_2)$ are both balanced.
\end{thm}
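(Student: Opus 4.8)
The plan is to read off the signed incidence graph $(B_i(K_1\Box K_2),\s)$ directly from the face structure of the Cartesian product, and to treat the cases $i\ge 1$ and $i=0$ separately, since they behave very differently. Throughout, balancedness depends only on the signs of edges and not on the weights, so I would ignore the normalization and work purely with the signs coming from $\s$. The two inputs I would rely on are the sign identity recorded before the statement, namely $\sgn([F\times v],\p[\bar F\times v])=\sgn([F],\p[\bar F])$ together with its vertical analogue $\sgn([u\times F'],\p[u\times \bar F'])=\sgn([F'],\p[\bar F'])$, and the characterization of balancedness by switching (Lemma \ref{balance}).

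First I would settle the case $i\ge 1$ by a structural decomposition. For $i\ge 1$ every $i$-face of $K_1\Box K_2$ is either \emph{horizontal}, of the form $F\times v$ with $F\in S_i(K_1)$ and $v\in V(K_2)$, or \emph{vertical}, of the form $u\times F'$ with $u\in V(K_1)$ and $F'\in S_i(K_2)$, and the two types are disjoint because an $i$-simplex with $i\ge 1$ cannot have all first coordinates equal and all second coordinates equal at once. The same dichotomy holds for $(i+1)$-faces, and every $i$-face in the boundary of a horizontal $(i+1)$-face $\bar F\times v$ is again horizontal, of the form $(\bar F\setminus u_j)\times v$, with the vertical case symmetric. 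Hence no edge of $B_i(K_1\Box K_2)$ joins a horizontal face to a vertical one, so
$$B_i(K_1\Box K_2)=\Big(\bigsqcup_{v\in V(K_2)}B_i(K_1)\Big)\ \sqcup\ \Big(\bigsqcup_{u\in V(K_1)}B_i(K_2)\Big),$$
and the sign identities show that each horizontal copy is signed-isomorphic to $(B_i(K_1),\s)$ and each vertical copy to $(B_i(K_2),\s)$. Since a disjoint union of signed graphs is balanced exactly when each component is, and at least one copy of each factor occurs, the equivalence follows immediately when $i\ge 1$.

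The remaining, and genuinely harder, case is $i=0$, where the horizontal and vertical pieces share the common vertex set $V(K_1)\times V(K_2)$ and the graph no longer splits. Here $B_0(K_1\Box K_2)$ is precisely the signed subdivision of the graph Cartesian product $K_1^{(1)}\Box K_2^{(1)}$, so by the observation recorded after Corollary \ref{multi} its balancedness is equivalent to bipartiteness of $K_1^{(1)}\Box K_2^{(1)}$. I would then prove that the Cartesian product of two graphs is bipartite if and only if both factors are: the forward direction restricts a proper $2$-colouring to the subgraphs $K_1^{(1)}\times\{v\}$ and $\{u\}\times K_2^{(1)}$, while the reverse direction combines proper $2$-colourings $c_1,c_2$ of the factors into $c(u,v)=c_1(u)+c_2(v)\bmod 2$, which is proper because each edge of the product alters exactly one coordinate along an edge of a factor. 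In the paper's language this is the statement that all-positive switchings of $B_0(K_1)$ and $B_0(K_2)$ can be merged into one for the product, again via Lemma \ref{balance}. Translating back yields the $i=0$ case, and the two cases together prove the theorem.

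The main obstacle is exactly this $i=0$ case: unlike $i\ge 1$, the signed incidence graph is connected and interleaves the two factors, so one cannot argue componentwise and must instead control the sign of an arbitrary cycle of the subdivision through the parities of the corresponding closed walks in the factor graphs. The step I expect to need the most care is the reduction ``balanced $\iff$ bipartite'' for $B_0$ and the colouring compatibility in the product. I would note that the $(i+1)$-path-connectedness of the factors is essentially a standing hypothesis ensuring each $B_i(K_j)$ is connected and matching the conventions used elsewhere; the balancedness argument above uses it only marginally, and for $i\ge 1$ it is not needed at all.
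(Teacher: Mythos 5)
Your proposal is correct, and both directions of both cases check out: the disjoint splitting of $B_i(K_1\Box K_2)$ into $|V(K_2)|$ copies of $B_i(K_1)$ and $|V(K_1)|$ copies of $B_i(K_2)$ for $i\ge 1$ is exactly right (the horizontal/vertical dichotomy is clean because a simplex with at least two vertices cannot have both coordinates constant), and the reduction of the $i=0$ case to bipartiteness of the graph Cartesian product, via the fact that a cycle of length $k$ in the $1$-skeleton subdivides to a cycle of sign $(-1)^k$ in $B_0$, is also sound. One thing to be aware of: this theorem is quoted in the present paper from \cite{FWW} without proof, so there is no in-paper argument to compare against; but the two structural ingredients you use are precisely the ones this paper itself borrows from \cite{FWW} elsewhere --- the $i\ge 1$ decomposition is invoked as \cite[Lemma 4.12]{FWW} in the proof of Theorem \ref{cart-eigenvalue}, and the ``$B_0$ balanced $\iff$ $K^{(1)}$ bipartite'' observation appears right after Corollary \ref{multi}. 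So your write-up is effectively a self-contained reconstruction of the intended proof. Two small points of hygiene: you should say explicitly that you work with the orientation on $K_1\Box K_2$ induced from those on the factors (harmless by Lemma \ref{reverse}, since balancedness is a switching invariant), and in the $i\ge 1$ case note that the standing assumption $\dim K_j>i$ guarantees each factor contributes at least one nonempty copy, so that ``each component balanced'' really forces both $B_i(K_1)$ and $B_i(K_2)$ to be balanced.
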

\begin{thm}\label{cart-eigenvalue}
Let $K_1, K_2$ be $(i+1)$-path connected complexes.
Then the following results hold.

$(1)$ If $i=0$, $K_1\Box K_2$ is $1$-path connected, and $\lamax(\Delta_0^{up}(K_1\Box K_2))=2$ if and only if $B_0(K_1)$ and $B_0(K_2)$ are both balanced.

$(2)$ If $i \ge 1$, then $K_1\Box K_2$ is not  $(i+1)$-path connected, and $\lamax(\Delta_i^{up}(K_1\Box K_2))=i+2$ if and only if one of $B_i(K_1)$ and $B_i(K_2)$ is balanced.
\end{thm}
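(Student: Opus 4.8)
The plan is to reduce both parts to Theorem \ref{main-result}, using throughout the equivalence (already exploited in the proof of Theorem \ref{wedge-eigenvalue}) that a complex is $(i+1)$-path connected if and only if its $i$-th incidence graph is connected. Balancedness of a signed incidence graph depends only on the orientation and not on the weights, so all the combinatorial input about balancedness can be read off from the structure of $B_i(K_1\Box K_2)$, while Theorem \ref{main-result} supplies the translation to the eigenvalue $i+2$.

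For part (1), I would first note that the $1$-skeleton of $K_1\Box K_2$ is exactly the graph Cartesian product of $K_1^{(1)}$ and $K_2^{(1)}$. Since $K_1$ and $K_2$ are $1$-path connected, these factor graphs are connected, and the Cartesian product of connected graphs is connected; hence $K_1\Box K_2$ is $1$-path connected and has a single $1$-path connected component, namely itself. Theorem \ref{main-result} then gives $\lamax(\Delta_0^{up}(K_1\Box K_2))=2$ if and only if $B_0(K_1\Box K_2)$ is balanced, and Theorem \ref{cart} identifies this with the simultaneous balancedness of $B_0(K_1)$ and $B_0(K_2)$, which is the assertion.

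The real content is in part (2), and the main obstacle is to pin down the $(i+1)$-path connected components of $K_1\Box K_2$ when $i\ge 1$. Here every $(i+1)$-face is either of the form $\bar F\times v$ with $\bar F\in S_{i+1}(K_1)$ and $v\in V(K_2)$, or of the form $u\times \bar F'$ with $u\in V(K_1)$ and $\bar F'\in S_{i+1}(K_2)$, and its boundary $i$-faces all share its transverse coordinate ($\bar F\times v$ has boundary faces $F\times v$, and $u\times \bar F'$ has boundary faces $u\times F'$). The decisive computation is that two $(i+1)$-faces lying in different factors intersect in at most a single vertex, while two $(i+1)$-faces of the same factor with distinct transverse coordinate are disjoint; since for $i\ge 1$ an $i$-face has $i+1\ge 2$ vertices, none of these pairs can meet in an $i$-face, hence none is $(i+1)$-path adjacent. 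Consequently the components are precisely the transverse copies $\{\bar F\times v:\bar F\in S_{i+1}(K_1)\}$, one for each $v\in V(K_2)$, and $\{u\times \bar F':\bar F'\in S_{i+1}(K_2)\}$, one for each $u\in V(K_1)$; within a fixed copy the incidence pattern mirrors that of $K_1$ (resp. $K_2$), so each copy is $(i+1)$-path connected because $K_1$ (resp. $K_2$) is. In particular there are at least two components, so $K_1\Box K_2$ is not $(i+1)$-path connected.

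To finish, I would observe that the map $\bar F\mapsto \bar F\times v$, $F\mapsto F\times v$ is a sign-preserving isomorphism of the $v$-copy's signed incidence graph onto $B_i(K_1)$ (using $\sgn([F\times v],\p[\bar F\times v])=\sgn([F],\p[\bar F])$ recorded after the definition of the Cartesian product), and symmetrically for the $u$-copies and $B_i(K_2)$. Applying Theorem \ref{main-result} to $K_1\Box K_2$, its largest eigenvalue equals $i+2$ exactly when one of these components has a balanced signed incidence graph, i.e. exactly when $B_i(K_1)$ or $B_i(K_2)$ is balanced. The only point needing care beyond the component count is confirming the sign-preserving identification of each copy with $B_i(K_1)$ or $B_i(K_2)$, which is immediate from the coordinate-wise description of boundaries; the essential hypothesis making the whole decomposition work is $i\ge 1$, since it is exactly what prevents $i$-faces of the two factors from coinciding or being shared.
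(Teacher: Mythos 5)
Your proof is correct and follows essentially the same route as the paper: part (1) is argued identically, and part (2) rests on the same decomposition of $B_i(K_1\Box K_2)$ into $|V(K_2)|$ disjoint copies of $B_i(K_1)$ and $|V(K_1)|$ disjoint copies of $B_i(K_2)$, combined with Theorem \ref{main-result}. The only difference is that you prove this decomposition directly from the coordinate description of the faces (correctly isolating $i\ge 1$ as the reason no $i$-face is shared between copies), whereas the paper simply cites \cite[Lemma 4.12]{FWW} for it.
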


\begin{proof}
If $i=0$, then $(K_1\Box K_2)^{(1)}$ is the usual Cartesian product of graphs $K_1^{(1)}$ and $K_2^{(1)}$.
It is known that $(K_1\Box K_2)^{(1)}$ is connected if $K_1^{(1)}$ and $K_2^{(1)}$ are both connected  \cite{IK}.
So $K_1\Box K_2$ is $1$-path connected, and by Theorem \ref{main-result} $\lamax(\Delta_0^{up}(K_1\Box K_2))=2$ if and only if $K_1\Box K_2$ is balanced.
The result follows from Theorem \ref{cart}.

If $i \ge 1$, then by \cite[Lemma 4.12]{FWW},
$B_i(K_1 \Box K_2)$ is the union of following disjoint graphs:
$|V(K_2)|$ copies of $B_i(K_1)$, and $|V(K_1)|$ copies of $B_i(K_2)$, and
$$
\L_i^{up}(K_1\Box K_2)=\L_i^{up}(K_1)\otimes I_{V(K_2)} \oplus I_{V(K_1)}\otimes \L_i^{up}(K_2),
$$
where $I_{V(K_1)},I_{V(K_2)}$ are the identity matrices defined on $V(K_1),V(K_2)$ respectively.
So, $K_1\Box K_2$ is not  $(i+1)$-path connected, and $\lamax(\Delta_i^{up}(K_1\Box K_2))=i+2$ if and only if $\lamax(\Delta_i^{up}(K_1))=i+2$ or $\lamax(\Delta_i^{up}(K_2))=i+2$.
The result  follows from Theorem \ref{main-result}.
\end{proof}


\subsection{Duplication of motifs}
\begin{defi}\cite{HJ13B}
Let $K$ be a complex and $S$ be a collection of simplices in $K$.
The \emph{closure} of $S$ written $\cl S$ is the smallest subcomplex of $K$ that contains all simplices in $S$.
The \emph{star} of $S$ written $\st S$ is the set of all simplices in $K$ that have a face in $S$.
The \emph{link} of $S$, written $\lk S$,  is defined to be $\cl \st S-\st \cl S$.
\end{defi}

\begin{defi}\cite{HJ13B}
Let $K$ be a complex. The subcomplex $\Sigma$ of $K$ is a \emph{motif} if $\Sigma$ contains all simplices in $K$ on the vertices of $V(\Sigma)$.
\end{defi}

\begin{defi}\cite{HJ13B}\label{motif}
Let $\Sigma$ be a subcomplex of a complex $K$. $\Sigma$ is called a \emph{$i$-motif } if the following conditions hold.

(1) ($\forall F_1,F_2\in \Sigma$), if $F_1,F_2\subset F \in K$, then $F\in \Sigma$.

(2) dim $\lk\Sigma=i$.
\end{defi}

\begin{lem}\cite{HJ13B}\label{dimLK}
If $K$ is an $(i+1)$-path connected complex, then any motif satisfying (1) in Definition \ref{motif} will have a link of dimension $i$.
\end{lem}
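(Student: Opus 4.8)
The plan is to extract from condition (1) a single combinatorial property of $V(\Sigma)$ and then to read off $\lk \Sigma$ from it. First I would note that (1) forces $\Sigma$ to be the full subcomplex on $V(\Sigma)$: if $F\in K$ with $F\subseteq V(\Sigma)$ and $|F|\ge 2$, then any two distinct vertices of $F$ are faces of $\Sigma$ properly contained in $F$, so (1) gives $F\in\Sigma$ (and the cases $|F|\le 1$ are trivial). The decisive consequence is the contrapositive, which I would isolate as a \emph{separation property}: every $F\in K$ with $F\not\subseteq V(\Sigma)$ satisfies $|F\cap V(\Sigma)|\le 1$. Indeed, were such an $F$ to contain two vertices of $V(\Sigma)$, then $F\cap V(\Sigma)$ (a proper face of $F$, since $F\not\subseteq V(\Sigma)$, and a face of $\Sigma$ by fullness) together with a single vertex of it would be two distinct proper faces of $F$ lying in $\Sigma$, whence (1) would force $F\in\Sigma$, a contradiction. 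I expect this separation property to drive the whole argument.

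Next I would compute the link. Unwinding $\lk\Sigma=\cl\st\Sigma-\st\cl\Sigma$ for the full subcomplex $\Sigma$ gives $\st\cl\Sigma=\st\Sigma=\{\rho\in K:\rho\cap V(\Sigma)\ne\emptyset\}$ and $\cl\st\Sigma=\{\tau\in K:\tau\cup\{v\}\in K \text{ for some } v\in V(\Sigma)\}$, so
$$\lk\Sigma=\{\tau\in K:\tau\cap V(\Sigma)=\emptyset \text{ and } \tau\cup\{v\}\in K \text{ for some } v\in V(\Sigma)\}.$$
Here the cone vertex is genuinely a single vertex: by the separation property the face $\tau\cup\{v\}$, which is not contained in $V(\Sigma)$, cannot absorb a second vertex of $V(\Sigma)$, so $\tau$ is never joined to an edge of $\Sigma$. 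Since every such $\tau\cup\{v\}$ is a face of $K$ of dimension at most $i+1$ --- we may pass to the $(i+1)$-skeleton $K^{(i+1)}$, which alters neither $\Delta_i^{up}(K)$, nor $B_i(K)$, nor the notion of $(i+1)$-path-connectedness --- we obtain $\dim\tau\le i$, that is, the upper bound $\dim\lk\Sigma\le i$.

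For the reverse inequality I would exhibit an $i$-face of $\lk\Sigma$, and this is the step where $(i+1)$-path-connectedness is essential; I expect it to be the main obstacle. It suffices to produce an $(i+1)$-face $\bar{F}$ with $|\bar{F}\cap V(\Sigma)|=1$, since then $\bar{F}$ with its unique $V(\Sigma)$-vertex removed is an $i$-face lying in $\lk\Sigma$. Taking $\Sigma$ proper, I would fix a vertex $v\in V(\Sigma)$ inside some $(i+1)$-face and an $(i+1)$-face $\bar{F}^{*}\not\subseteq V(\Sigma)$ (both exist in the nondegenerate case, where some $(i+1)$-face meets $V(\Sigma)$), join them by an $(i+1)$-path $\bar{F}_1,\ldots,\bar{F}_m$, and let $\bar{F}_k$ be the first face on it with $\bar{F}_k\not\subseteq V(\Sigma)$. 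For $k\ge 2$ one has $\bar{F}_{k-1}\subseteq V(\Sigma)$, so the shared $i$-face $\bar{F}_{k-1}\cap\bar{F}_k$ places all of its $i+1$ vertices into the escaping face $\bar{F}_k$; when $i\ge 1$ this yields $|\bar{F}_k\cap V(\Sigma)|\ge i+1\ge 2$ with $\bar{F}_k\not\subseteq V(\Sigma)$, contradicting separation, so necessarily $k=1$ and $\bar{F}_1\ni v$, while when $i=0$ the single shared vertex is, by separation, the only $V(\Sigma)$-vertex of $\bar{F}_k$. In either case the first escaping face meets $V(\Sigma)$ in exactly one vertex, which is the required face; combining the two bounds gives $\dim\lk\Sigma=i$.
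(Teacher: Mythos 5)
The paper offers no proof of this lemma --- it is quoted verbatim from \cite{HJ13B} --- so there is nothing internal to compare against, and your argument has to stand on its own. Its core is sound and is surely close to what Horak and Jost intend: condition (1) plus fullness yields the separation property $|F\cap V(\Sigma)|\le 1$ for $F\not\subseteq V(\Sigma)$, the link unwinds to $\{\tau: \tau\cap V(\Sigma)=\emptyset,\ \tau\cup\{v\}\in K\}$, and the ``first escaping face'' argument along an $(i+1)$-path is exactly the right way to exploit $(i+1)$-path connectedness for the lower bound.

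There is, however, a genuine gap in the upper bound. Replacing $K$ by $K^{(i+1)}$ does preserve $\Delta_i^{up}$, $B_i$, and $(i+1)$-path connectedness, but those invariances are irrelevant here: the lemma is a statement about $\dim\lk\Sigma$, and passing to the skeleton changes $\lk\Sigma$. Indeed, without a dimension restriction the statement is false: take $K$ to be the full simplex on $i+3$ vertices (which is $(i+1)$-path connected) and $\Sigma$ a single vertex; then $\Sigma$ is a motif satisfying (1) vacuously, yet $\lk\Sigma$ is the full simplex on the remaining $i+2$ vertices and has dimension $i+1$. So the hypothesis $\dim K\le i+1$ (in \cite{HJ13B} the standing assumption in that section is that $K$ is pure of dimension $i+1$) is doing real work and cannot be smuggled in via the skeleton reduction as justified. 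A related, smaller issue afflicts the lower bound: the existence of a vertex of $V(\Sigma)$ lying in an $(i+1)$-face and of an $(i+1)$-face not contained in $V(\Sigma)$ --- your ``nondegenerate case'' --- does not follow from $\Sigma$ being a proper motif alone (a motif consisting of a vertex isolated from all $(i+1)$-faces, or one containing every $(i+1)$-face, defeats it), though both do follow once one assumes $K$ is pure $(i+1)$-dimensional. The fix is not a new idea but an explicit acknowledgment that the lemma needs, and the source supplies, the purity/dimension hypothesis; with that added, your proof is correct.
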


\begin{defi}
Tow complexes $K$ and $L$ are isomorphic if there exists a bijection $f: V(K)\rightarrow V(L)$ such that $\{v_0,\ldots, v_k\}\in K$ if and only if $\{f(v_0),\ldots, f(v_k)\}\in L$.
\end{defi}

Now let $\Sigma$ be an $i$-motif of $K$ with
vertex set $V(\Sigma)=\{v_0,\ldots, v_k\}$.
Let $V(\lk\Sigma)=\{u_0,\ldots, u_m\}$.
 According to the definition of link, $V(\Sigma)\cap V(\lk\Sigma)=\emptyset$.
 Define a complex $\Sigma'$ with vertex set $V(\Sigma')=\{v'_0,\ldots, v'_k\}$,
 which is isomorphic to $\Sigma$ by the isomorphism  $f: v_i \mapsto v'_i$.
 Then
$$K^{\Sigma}:=K\cup \{\{v'_{i_0},\ldots, v'_{i_l},u_{j_1},\ldots, u_{j_s}\}\mid\{v_{i_0},\ldots, v_{i_l},u_{j_1},\ldots, u_{j_s}\}\in K\}$$
 is a complex obtained from $K$ by the \emph{duplication of the $i$-motif $\Sigma$}.
 Denote by $K_{\Sigma'}=(K \backslash \st \Sigma) \cup \st \Sigma'$, a subcomplex of $K^\Sigma$ which is isomorphic to $K$ via a map $\bar{f}$ with $\bar{f}|_{V(\Sigma)}=f$ and  $\bar{f}|_{V(K)\backslash V(\Sigma)}=id$.

\begin{thm}\cite{FWW}\label{motif}
Let $K$ be an $(i+1)$-path  connected complex and let $\Sigma$ be  an $i$-motif of $K$.
Then $B_i(K^{\Sigma})$ is balanced if and only if $B_i(K)$ is balanced.
\end{thm}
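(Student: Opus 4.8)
The plan is to prove the two directions separately, reducing everything to the principle (from Lemma \ref{balance} together with Lemma \ref{reverse}) that the $i$-th signed incidence graph of a complex is balanced precisely when the complex admits an orientation making \emph{all} incidence signs equal to $+1$. The forward direction is then immediate: since $K$ is a subcomplex of $K^\Sigma$, the signed graph $B_i(K)$ is a subgraph of $B_i(K^\Sigma)$, with identical signs on the shared edges once compatible orientations are chosen (which is harmless, as balancedness is orientation-independent by the remark following Lemma \ref{reverse}). Every cycle of $B_i(K)$ is a cycle of $B_i(K^\Sigma)$, so if the latter is balanced so is the former. Hence I would devote essentially all of the work to the converse.

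For the converse, the first and key step is to record the structural decomposition of the duplication. Using the $i$-motif hypotheses and the construction of $K^\Sigma$, I would show that $K^\Sigma=K\cup K_{\Sigma'}$, where $K_{\Sigma'}=(K\setminus\st\Sigma)\cup\st\Sigma'$ is isomorphic to $K$ via the map $\bar f$ (the identity off $V(\Sigma)$ and $v_j\mapsto v'_j$ on $V(\Sigma)$), the two copies meeting exactly in the faces of $K\setminus\st\Sigma$. The crucial combinatorial observation is that no face of $K^\Sigma$ simultaneously contains an original motif vertex $v_j$ and a duplicated vertex $v'_l$: every face is either unprimed, hence a face of $K$, or primed, hence a face of $K_{\Sigma'}$. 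Consequently every incidence $(F,\bar F)$ of $K^\Sigma$ has both of its faces in the same copy (if $\bar F$ is unprimed all its boundary faces are unprimed, and if $\bar F$ is primed all its boundary faces are primed), so as signed graphs
$$B_i(K^\Sigma)=B_i(K)\cup B_i(K_{\Sigma'}),$$
with no edge joining a vertex exclusive to $B_i(K)$ to one exclusive to $B_i(K_{\Sigma'})$.

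Granting this decomposition, the second step is a direct orientation-extension argument. Assuming $B_i(K)$ is balanced, I would choose (by Lemma \ref{balance} and Lemma \ref{reverse}) an orientation $\s$ of $K$ for which all incidence signs equal $+1$, and then define an orientation $\s^\Sigma$ of $K^\Sigma$ by orienting each unprimed face by $\s$ and each primed face by transporting $\s$ along $\bar f$. These prescriptions agree on the shared faces of $K\setminus\st\Sigma$, where $\bar f$ is the identity, so $\s^\Sigma$ is well defined. Since a simplicial isomorphism that transports orientations preserves each boundary sign $\sgn([F],\p[\bar F])$, every incidence of $K_{\Sigma'}$ receives sign $+1$, while every incidence of $K$ already has sign $+1$; by the decomposition there are no further incidences, so $B_i(K^\Sigma,\s^\Sigma)$ is all-positive and hence balanced. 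As balancedness is independent of the chosen orientation, $B_i(K^\Sigma)$ is balanced.

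I expect the main obstacle to be the structural step, namely verifying rigorously that $B_i(K^\Sigma)$ splits as $B_i(K)\cup B_i(K_{\Sigma'})$ with no incidence straddling the two copies. This is exactly the property that allows two balanced pieces to glue into a balanced whole without the usual danger of producing a negative cycle running through both pieces, and it is where the $i$-motif conditions (Definition \ref{motif}) and the $(i+1)$-path connectedness of $K$ enter, guaranteeing through Lemma \ref{dimLK} that the duplication is well behaved, that duplicated faces replace all motif vertices at once, and that primed and original motif vertices never coexist in a single face.
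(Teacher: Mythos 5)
Your argument is correct, but note that this paper does not actually prove Theorem \ref{motif}: it is imported verbatim from \cite{FWW} (arXiv:2405.19078), so there is no in-paper proof to compare against, and your proposal stands as a self-contained argument. The two pillars you identify are both sound. First, the decomposition $B_i(K^\Sigma)=B_i(K)\cup B_i(K_{\Sigma'})$ with no incidence straddling the two copies is genuinely true: by the definition of $\lk\Sigma$ every face of $\st\Sigma$ is a union of motif vertices and link vertices, the new faces of $K^\Sigma$ carry primed motif vertices and link vertices only, and no face of $K^\Sigma$ contains both a $v_j$ and a $v'_l$, so each incidence lies entirely in one copy. One small slip: your parenthetical ``if $\bar F$ is primed all its boundary faces are primed'' is false when $\bar F$ has exactly one primed vertex (omitting it yields an unprimed boundary face consisting of link vertices); but that face lies in $K\setminus\st\Sigma\subseteq K_{\Sigma'}$, so the conclusion you actually need --- both endpoints in the same copy --- survives, and your orientation $\s^\Sigma$ is still consistent on it because $\bar f$ is the identity there. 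Second, the orientation-transport step (choose, via Lemma \ref{balance} and Lemma \ref{reverse}, an orientation making $B_i(K)$ all-positive, push it forward along $\bar f$, and observe that a simplicial isomorphism preserves boundary signs) correctly avoids the pitfall of cycles running through both copies, which a naive ``union of balanced graphs'' argument would not handle. Your proof in fact uses the $i$-motif hypothesis only through the structure of $\st\Sigma$ and $\lk\Sigma$, not through $\dim\lk\Sigma=i$ or the $(i+1)$-path connectedness of $K$, so it is if anything slightly more general than the stated theorem.
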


\begin{thm}\label{motif-eigenvalue}
Let $K$ be an $(i+1)$-path  connected complex and let $\Sigma$ be  an $i$-motif of $K$.
Then $K^{\Sigma}$ is $(i+1)$-path  connected, and  $\lamax(\Delta_i^{up}(K^{\Sigma}))=i+2$ if and only if $B_i(K)$ is balanced.
\end{thm}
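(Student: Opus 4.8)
The plan is to combine the structural result on balancedness under duplication (Theorem \ref{motif}) with the spectral characterization (Theorem \ref{main-result}), so the only genuine work is establishing that $K^\Sigma$ remains $(i+1)$-path connected. Once connectivity is in hand, the equivalence is immediate: by Theorem \ref{main-result}, $\lamax(\Delta_i^{up}(K^\Sigma))=i+2$ if and only if the $(i+1)$-path connected complex $K^\Sigma$ has $B_i(K^\Sigma)$ balanced; by Theorem \ref{motif} this holds if and only if $B_i(K)$ is balanced. So the statement reduces entirely to the connectivity claim.

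To prove $K^\Sigma$ is $(i+1)$-path connected, I would work with the description of $B_i(K^\Sigma)$ and show it is connected, since $(i+1)$-path connectivity of a complex is equivalent to connectivity of its $i$-th incidence graph (as used throughout Section 3). Recall $K^\Sigma = K \cup K_{\Sigma'}$, where $K_{\Sigma'}$ is the isomorphic copy obtained by replacing $\st\Sigma$ with $\st\Sigma'$ via the map $\bar f$. First I would note that both $K$ and $K_{\Sigma'}$ are $(i+1)$-path connected: $K$ by hypothesis, and $K_{\Sigma'}$ because it is isomorphic to $K$ via $\bar f$, and $(i+1)$-path connectivity is an isomorphism invariant. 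The remaining point is that the two copies are glued along enough faces to keep the union connected at the $i$-face level.

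The key observation is that the duplicated simplices all contain the link vertices $u_{j_1},\ldots,u_{j_s}$ of $\lk\Sigma$, which are shared between $K$ and $K^\Sigma$; only the $\Sigma$-vertices are primed. Concretely, any $(i+1)$-face of $K^\Sigma$ that lies outside $K$ has the form $\{v'_{i_0},\ldots,v'_{i_l},u_{j_1},\ldots,u_{j_s}\}$ with a corresponding unprimed face in $\st\Sigma \subset K$. Since $\dim\lk\Sigma = i$ by Definition \ref{motif} (and Lemma \ref{dimLK}), every such new $(i+1)$-face meets the common part $K \setminus \st\Sigma$ in $i$-faces built from the link, and these shared $i$-faces are exactly the faces that do not involve any vertex of $V(\Sigma)$. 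Thus in $B_i(K^\Sigma)$ each new $(i+1)$-face vertex is adjacent to $i$-face vertices that also lie in the copy of $B_i(K)$, so the primed part $B_i(K_{\Sigma'})$ attaches to $B_i(K)$ along these link-faces. I would make this precise by exhibiting, for an arbitrary $(i+1)$-face $\bar F'$ of $K_{\Sigma'}$, an $i$-path in $B_i(K^\Sigma)$ from a face of $\bar F'$ to a face of $K$, using a face in $\p\bar F'$ that survives into $K\setminus\st\Sigma$.

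The main obstacle will be verifying that the gluing faces genuinely exist in both copies, i.e. that every added $(i+1)$-face $\bar F'$ has a boundary $i$-face lying in the shared complex $K\setminus\st\Sigma$ rather than entirely inside the duplicated star. This is where the dimension condition $\dim\lk\Sigma=i$ is essential: it forces each facet of the relevant stars to contain at least one link vertex, so the boundary face obtained by deleting a primed $\Sigma$-vertex is a face of the common region and hence a shared vertex of $B_i(K^\Sigma)$. Handling the bookkeeping of which boundary faces are shared, and ruling out the degenerate case where a new face might be disconnected from $K$, is the only delicate part; the rest is a direct appeal to Theorems \ref{motif} and \ref{main-result}.
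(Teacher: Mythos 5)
Your overall route is the same as the paper's: reduce the theorem to the connectivity of $B_i(K^\Sigma)$, note that $B_i(K)$ and $B_i(K_{\Sigma'})$ are each connected (the latter since $K_{\Sigma'}\cong K$) and are glued along the faces of $K\setminus \st\Sigma\supseteq \lk\Sigma$, invoke $\dim \lk\Sigma=i$ (Lemma \ref{dimLK}) to get a shared $i$-face, and then quote Theorems \ref{motif} and \ref{main-result}. That outline is complete and is exactly what the paper does. However, the auxiliary claim you isolate as ``the main obstacle'' --- that \emph{every} added $(i+1)$-face $\bar{F}'$ has a boundary $i$-face lying in the shared complex $K\setminus\st\Sigma$, obtained by deleting a primed vertex --- is false in general: if $\bar{F}'=\{v'_{i_0},\ldots,v'_{i_l},u_{j_1},\ldots,u_{j_s}\}$ contains two or more primed vertices ($l\ge 1$), then every facet in $\p\bar{F}'$ still contains a primed vertex and so lies entirely inside $\st\Sigma'$. (Concretely, for $i=1$ and $\{v_0,v_1,u\}\in\st\Sigma$, the duplicate $\{v'_0,v'_1,u\}$ has no edge avoiding primed vertices.) Fortunately this pointwise statement is not needed: since $B_i(K_{\Sigma'})$ is connected, a \emph{single} shared $i$-face suffices to join the two connected pieces, and such a face exists because $\dim\lk\Sigma=i$ gives an $i$-face $F\in\lk\Sigma\subseteq K\setminus\st\Sigma$ which, together with a vertex of $\Sigma$ (resp.\ of $\Sigma'$), extends to an $(i+1)$-face of $K$ (resp.\ of $K_{\Sigma'}$), so $F$ is a non-isolated vertex of both $B_i(K)$ and $B_i(K_{\Sigma'})$. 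Drop the per-face claim and the argument goes through as in your second paragraph, matching the paper's proof.
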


\begin{proof}
Let $\Sigma'$ and $K_{\Sigma'}$ be  complexes  defined as the above.
Note that by Lemma \ref{dimLK}, the link $\lk \Sigma$ has dimension $i$.
Noting that $B_i(K)$ is connected, so $B_i(K_{\Sigma'})$ is connected as it is isomorphic to $K$.
We also find $B_i(K)$ and $B_i(K_{\Sigma'})$ share common vertices or the faces of $K\backslash \st \Sigma \supseteq \lk \Sigma$.
So $B_i(K^\Sigma)$ is connected, and $K^{\Sigma}$ is $(i+1)$-path  connected.
The result now follows from Theorem \ref{main-result} and Theorem \ref{motif}.
\end{proof}

By Theorem \ref{motif-eigenvalue}, one can apply the duplication of motifs to construct an infinite family of $(i+1)$-path connected complexes $K$ which the largest eigenvalue of $\Delta_i^{up}(K)$ being $i+2$ for each $i \ge 0$.

\end{document}